\newcommand{\1}{1\!\!\!\!\!\;{\rm I}}
\newcommand{\mbR}{{\mathbb R}}
\newcommand{\cC}{{\mathcal C}}
\newcommand{\cD}{{\mathcal D}}
\newcommand{\cS}{{\mathcal S}}
\newcommand{\cG}{{\mathcal G}}
\newcommand{\ba}{\begin{aligned}}
\newcommand{\ea}{\end{aligned}}
\newcommand{\wt}{\widetilde}
\newcommand{\ve}{\varepsilon}
\newcommand{\Pb}{\mathrm{P}} 
\newcommand{\E}{\mathrm{E}} 
\newcommand{\be}{\begin{equation}}
\newcommand{\bel}{\begin{equation}\label}
\newcommand{\ee}{\end{equation}}
\begin{document}
\title[Boundary Approximation for Sticky Jump-Reflection]{Boundary Approximation for Sticky Jump-Reflected Processes on the Half-Line}
\author{Andrey Pilipenko}
\address{Institute of Mathematics, National Academy of Sciences of Ukraine, Kyiv; Igor Sikorsky Kyiv Polytechnic Institute}
\email{pilipenko.ay@gmail.com}
\author{Andrey Sarantsev}
\address{Department of Mathematics and Statistics, University of Nevada, Reno}
\email{asarantsev@unr.edu}
\keywords{Reflecting map; Skorokhod map; Sticky processes; Reflecting barriers; Invariance principle; Feller-Wentzell boundary conditions}
\subjclass[2020]{Primary: 60F17, 60J50; Secondary: 60J55; 60K25}

\begin{abstract}
The Skorokhod reflection was used in 1961 to create a reflected diffusion 
on the half-line. Later, it was used for processes with jumps such as reflected L\'evy processes. Like a Brownian motion, which is a weak limit of random walks, reflected processes on the half-line serve as weak limits of random walks with switching regimes at zero: one regime away from zero, the other around zero. 
 In this article, we develop a general theory of this regime change and prove convergence to a function with generalized reflection. Our results are deterministic and can be applied to a wide class of stochastic processes.
  Applications include storage processes, heavy traffic limits,  
      diffusion on a half-line with  a combination of 
    continuous reflection, jump exit, and a delay at 0.

\end{abstract}
\maketitle
\thispagestyle{empty}

\newtheorem{thm}{Theorem}
\newtheorem{lemma}{Lemma}
\newtheorem{corl}{Corollary}

\theoremstyle{definition}
\newtheorem{defn}{Definition}
\newtheorem{remk}{Remark}
\newtheorem{expl}{Example}

\section{Introduction}

In a classic M/M/1 queue, customers arrive at rate $\lambda$, are served at rate $\mu$, with inter-arrival and service independent exponential times. 
The number of customers in the queue  is a birth-and-death discrete-valued Markov process on $\{0, 1, 2, \ldots\}$ with birth rate $\lambda$ and death rate $\mu.$  Now, consider a sequence of queues as $\lambda \uparrow \mu$ ({\it heavy traffic limit}). With correct normalization, we can prove that a scaling limit of   these queues is  a reflected Brownian motion, possibly with drift. This is an $[0, \infty)$-valued process which behaves as a classic Brownian motion away from zero, and is reflected instantaneously (according to the Skorokhod reflection) at zero. We refer the reader to the classic book \cite{Whitt}. The method of proof is {\it continuous mapping:} We have a Skorokhod mapping which takes a function and makes a reflected version out of it. Applied to a random walk, this mapping gives us an M/M/1 queue. Applied to a Brownian motion, this gives us reflected Brownian motion. It is well known that a properly re-scaled random walk weakly converges to a Brownian motion. Since the Skorokhod mapping is continuous in 
the corresponding functional space, heavy traffic limit of M/M/1 queues is indeed a reflected Brownian motion, see also   \cite{Asmussen, Kushner, Whitt}.

However, Skorokhod reflection is far from the only reflection model for a Brownian motion, and, more generally, for diffusion or L\'evy processes. In addition to the Skorokhod reflection  the following boundary behavior are possible: 

\begin{itemize}
    \item Absorption: Hitting zero and stopping there
    \item Delayed/sticky reflection: Spending more time at zero than with classic Skorokhod reflection. Time spent at zero has positive Lebesgue measure.
    \item Jump reflection: Jumping out of zero upward, instead of reflecting continuously. 
\end{itemize}

Classification of reflection modes were done by Feller and Wentzell in \cite{Feller, Wentzell}. The construction was done by using  resolvents,   semigroups,  their generators, and boundary conditions. Probabilistic approach was originated by Ito and McKean \cite{Ito}, where the excursion theory was used. The work on continuity principle for these more general reflections is a hard problem. Delay models with Skorokhod's reflection are easier to 
study rather than jump-exit from the boundary, see for example \cite{HarrisonLemoine, Yamada1994ReflDelay}. This can be explained by nice properties of the Skorokhod reflecting map and the fact that a delay at the boundary is regulated by a local time at the boundary, which can be associated with a regulator boundary term of the Skorokhod problem. Construction of processes  with jump-exit  from the boundary is not a trivial task. For example,  the corresponding processes were constructed via an elegant usage of excursion theory \cite{Watanabe1975Wentzell_boundary},  see also an approach based on
the mixture of stochastic differential equations  together with martingale problem methods, and resolvent analysis \cite{Anulova, MikulyaviciusExist, MikuljaviciusUniq}. Jump-reflected Brownian motion    was constructed in \cite[Theorem 3.11]{Excursion} as a function of a Wiener process  and a subordinator, whose L\'evy measure coincides with an `intensity of exit from 0' of the jump-reflected Brownian motion.

In this article, we consider a general deterministic setting.   Instead of a queue, we consider a (deterministic) {\it switch problem:} It behaves differently at the boundary than away from zero.  For a limit function, we create a 
reflection mapping on the space of RCLL functions, which generalizes the classic Skorokhod mapping mentioned above and includes delays and jumps.   Then  convergence results  for various stochastic  models
will be received automatically as an application of continuous map theorem.
As an application of the general deterministic result  on convergence, we will be able to handle out limit theorems for storage processes, perturbed reflected random walks, spectrally positive L\'evy processes having jump-type reflection at 0. 

Let us concisely explain this {\it switch problem}. 
Definition~\ref{defn:switch} below in Section 3 provides a rigorous statement.
We have an input function $x$
driving the system in the regular regime, and a regulator $F$ for the boundary regime. The output function $y$ evolves together with $x$ (that is, $x$ and $y$ have the same increments) as long as $y$ does not get below $-\delta$. After that, we stop $x$ and restart $y$ as $F$, until $y$ gets above 0. Then we start $y$ again as $x$ from the place where we stopped $x$ previously, until $y$ gets below $-\delta$. Then restart $y$ again as $F$ from the place where we stopped $F$ last time, etc. 

Informally, this can be illustrated as follows: There is an Internet shop, which stores goods and gradually sells them to customers. When the inventory reaches zero (or even  some negative  level, i.e., there are more orders than goods), the shop switches to a critical regime. It is still accepting orders, but starts to order in larger batches. When the shop again has enough on hand, they stop to order in larger batches and 
continue the usual functioning.
Our goal in this article is to show that the solution to the switch problem converges to the solution to the generalized Skorokhod problem as the threshold $\delta$ between
the critical and usual regimes converges to 0. We stress that we show this in the general case for deterministic functions. 
This creates the framework for proving such weak convergence results for regulated stochastic processes.

 Moreover, we will obtain 
continuous dependence on controls in usual and critical regimes in the following sense.
We take two sequences of functions: $x_n \to x_0$ and $F_n \to F_0$. Next, we take a sequence of non-negative numbers $\delta_n \to 0$ and another sequence $\rho_n \to \rho \in [0, \infty]$ of time normalizing constants. For each $n$, we solve the switch problem with input function $x_n$, threshold $-\delta_n$, and regulator $F_n$ that is slowed down by the factor of $\rho_n$.

If $F_0$ is strictly increasing, and under some additional technical assumptions, then the solution to this switch problem converges to the reflected process as in~\eqref{eq:general-Skorohod} with jump reflection governed by $F_0$,   driving function $x_0$, and the time delay at 0  described by \eqref{eq:delay}.
The critical values $0$ and $\infty$ of parameter $\rho$ mean zero delay and absorption at 0, respectively.

\subsection{Notation} Let $\mathbb R_+ := [0, \infty)$. Let $\mathcal C_T$ be the space of continuous functions $[0, T] \to \mathbb R$ and $\mathcal C$ be the space of continuous functions $\mathbb R_+ \to \mathbb R$ with topology of the uniform convergence on compact sets. For $T > 0$, let $\mathcal D_T$ be the Skorokhod space of
right-continuous functions with left limits  $[0, T] \to \mathbb R$, abbreviated as RCLL or c\'adl\'ag in French language, 
 and $\mathcal D$ be the Skorokhod space of RCLL functions $x : \mathbb R_+ \to \mathbb R$. We endow the spaces $\cD_T, \cD$ with Skorokhod's $J_1$-topology, see, for example, \cite{Whitt}. We will denote weak convergence of stochastic processes (in $\cC$ or $\cD$) by $X_n\Rightarrow X$. We define $a_+ := \max(a, 0)$ and $a_- := \max(-a, 0)$ for $a \in \mathbb R$. For a set $A$, we let $\overline{A}$ be its closure. Let $\mathrm{mes}(A)$ be the Lebesgue measure of $A$. For $T > 0$, let $\Lambda_T$ be a set of continuous one-to-one strictly increasing functions $\lambda : [0, T] \to [0, T]$. 

 For an RCLL function $h : \mathbb R_+ \to \mathbb R$, we define $\Delta h(t) ;= h(t) - h(t-)$. If $h$ has jump at time $t$, this is the size of this jump. If $h$ is continuous at time $t$, this is zero.

\subsection{Organization of this article} In Section 2, we complete the discussion of three reflection modes (Skorokhod's reflection, jump-type reflection, and delay). We state rigorous definitions of the switching process in Section 3. Next, we state the main result: Theorem~\ref{thm:main}, which includes both cases: sticky and generalized jump-type reflected processes.
In Section 4, we present applications of our results to various stochastic processes, including the ones in the previous articles.  Section 5 is devoted to the proof of Theorem~\ref{thm:main}. The Appendix contains proofs of a few technical lemmas. 

\subsection{Acknowledgment} The first author acknowledges 
  a partial support     by the National Research Foundation of Ukraine, project 2020.02/0014 
\textit{Asymptotic regimes of perturbed random walks: on the edge of modern and classical
probability}. Also, the first author was supported by the project \textit{Mathematical modelling of complex dynamical systems and processes caused by the state security} (Ukraine, Reg. No. 0123U100853). The second author thanks his Department for welcoming and positive atmosphere. 

\section{Background:   Skorokhod Reflection and  Boundary Controls}

In this  section we recall basic constructions and properties of reflected processes in order to get better understanding of a nature of  result that we obtain.

A {\it reflected Brownian motion} can be constructed by simply taking the absolute value $|W|$ of a Brownian motion $W$. However this construction can't be called  `natural'   for other  stochastic processes, and in particular, for non-symmetric L\'evy processes. In two 1961 articles \cite{Skorokhod1961a, Skorokhod1961b} \textsc{Anatoliy Skorokhod} developed a method to reflect any continuous function $B : [0, \infty) \to \mathbb R$, deterministic or stochastic, with $B(0) \ge 0$. He found a pair of 
continuous non-negative functions $X, L : [0, \infty) \to \mathbb R$ such that
\begin{equation}
\label{eq:basic-Skorohod}
X(t) = B(t) + L(t),\, t \geq 0,
\end{equation}
$L$ is non-decreasing and can increase only when $X = 0$; finally, $L(0) = 0$. In this problem~\eqref{eq:basic-Skorohod}, the function $B$ is called {\it driving} or {\it input}. See an example in Figure~\ref{fig:classic}.

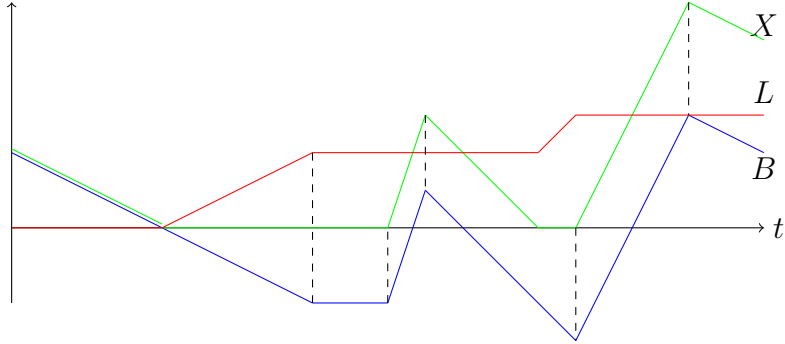
\begin{figure}[t]
\begin{tikzpicture}
\draw (10.2, 0) node {$t$};
\draw (10, 0.8) node {$B$};
\draw (10, 2.7) node {$X$};
\draw (10, 1.8) node {$L$};
    \draw[->] (0,0) -- (10,0); 
	\draw [->] (0,-1) -- (0,3);
	\draw [color = blue] (0, 1) -- (4, -1);
\draw [color = blue] (4, -1) -- (5, -1);
\draw [color = blue] (5, -1) -- (5.5, 0.5);
\draw [color = blue] (5.5, 0.5) -- (7.5, -1.5);
\draw [color = blue] (7.5, -1.5) -- (9, 1.5);
\draw [color = blue] (9, 1.5) -- (10, 1);
\draw [color = green] (0, 1.05) -- (2, 0.05);
\draw [color = green] (2, 0) -- (5, 0);
\draw [color = green] (5, 0) -- (5.5, 1.5);
\draw [color = green] (5.5, 1.5) -- (7, 0);
\draw [color = green] (7, 0) -- (7.5, 0);
\draw [color = green] (7.5, 0) -- (9, 3);
\draw [color = green] (9, 3) -- (10, 2.5);
\draw [color = red] (0, 0) -- (2, 0);
\draw [color = red] (2, 0) -- (4, 1);
\draw [color = red] (4, 1) -- (7, 1);
\draw [color = red] (7, 1) -- (7.5, 1.5);
\draw [color = red] (7.5, 1.5) -- (10, 1.5);
\draw [dashed] (4, 1) -- (4, -1);
\draw [dashed] (5, -1) -- (5, 0);
\draw [dashed] (5.5, 1.5) -- (5.5, 0.5);
\draw [dashed] (7.5, 0) -- (7.5, -1.5);
\draw [dashed] (9, 3) -- (9, 1.5);
 \end{tikzpicture}
     \caption{Classic Skorokhod reflection; $B$ is the original function; $X$ is the Skorokhod reflection; $L$ is the boundary term. 
     }
     \label{fig:classic}
    \end{figure}

The function $X$ has the same increments as $B$ while $X$ is positive, and the function $L$ pushes up only at instants when $X=0$ and have no effect otherwise. If $B$ is a Brownian motion, then $X$ has the same distribution as $|B|$. Thus, Skorokhod's definition is consistent with a naive approach to the notion of a reflected  Brownian motion. Moreover, it is well known that $L$ is the symmetric {\it local time} of $X$ at 0.

Later \textsc{Skorokhod}'s problem~\eqref{eq:basic-Skorohod} was generalized for functions $B\in \cD$; see, for example, \cite{DupuisIshii, Tanaka, mapping}.
The solution is given by the formula
\bel{eq:formula_eq_Sk}
L(t)=\sup\limits_{0 \leq s \leq t}(B(s)_-)=\Big(\inf\limits_{0 \leq s \leq t} B(s)\Big)_-,\ \ X(t)=B(t)+ \sup\limits_{0 \leq s \leq t}(B(s)_-).
\ee
The corresponding Skorokhod mapping $B\mapsto X$ is continuous in the Skorokhod space of RCLL functions (and is continuous in the subspace of continuous functions).  This allows us to prove functional limit theorems for heavy traffic limits as an application of continuous map theorem, see for example, \cite{Iglehart, Whitt}. For instant, the {\it Lindley's recursion} is nothing else but a solution to Skorokhod's  reflecting problem for random walks.

Assume again that $B$ is a Brownian motion, so that $X$ is a reflected Brownian motion. It is well known that $X$ spends zero time at 0 almost surely; that is, the Lebesgue measure 
of this time is zero:
\begin{equation}
\label{eq:zeros}
\mathrm{mes}(\{s\ge 0 \mid X(s) = 0\}) = 0\ \  \mbox{a.s.}
\end{equation}
Hence, the Skorokhod reflection  is {\it instantaneous.} There are other types of reflection and boundary behavior. A simple rule is alternatively called {\it absorbing}, or {\it stopping:} When the Brownian motion or any other input function (deterministic or stochastic) hits zero, it simply stops and stays constant after that. Another, more complicated rule, is {\it sticky reflection}. This version of a reflected Brownian motion is delayed when it hits zero.  Fix a parameter $\rho > 0$ called {\it delay rate} and consider the function $A(t) = t + \rho L(t)$ where $L$ is from~\eqref{eq:basic-Skorohod}. It is a continuous strictly increasing function and therefore it has an inverse $A^{-1}$. Plugging this inverse into the classic reflected function $X$, we get: 
\begin{equation}
    \label{eq:delay}
    \tilde{X}(t) = X(A^{-1}(t)),\quad A(t) = t + \rho L(t),\quad t \ge 0.
\end{equation} 

As a result, this process has same excursions as $X$ (but shifted in time) and spends positive time at zero, so~\eqref{eq:zeros} is no longer true.  The larger $\rho$ is, the longer the delay is. As $\rho \to \infty$, we get $A(t) \to \infty$, which corresponds to the  absorbed  process. When $\rho = 0$, we are back to the classic instantaneous reflection. 
We shall call this reflection {\it delayed}, as opposed to {\it instantaneous} classic Skorokhod reflection. However, this term should not be misunderstood.  This sticky reflected Brownian motion still leaves zero instantaneously  when it hits zero in the following sense. Notice  that the set of zeros of $\tilde X$ is a closed nowhere dense set a.s., so for any $t$ such that $\tilde X(t) = 0$ and  every $\varepsilon > 0$ there exists an $s \in (t, t + \varepsilon)$ with $\tilde X(s) > 0$.

The process $L$ is a  local time at 0 of the reflected Brownian motion $X$. Hence $A$ is a continuous additive functional of $X,$ and the general theory of Markov processes implies that the process $\tilde X$ is a strong Markov process. It can be proved that a sticky reflected Brownian motion is a  (weak) solution to the following stochastic differential equation:
\begin{align}
    \label{eq:RSDE-sticky}
    \begin{split}
\mathrm{d} X_\rho(t) &= \1_{\{X_\rho(t)>0\}}\, \mathrm{d} W(t) + \mathrm{d} L_\rho(t), \\
\mathrm{d} L_\rho(t) &= \rho^{-1}\1_{\{X_\rho(t)=0\}}\,\mathrm{d}t,            \end{split}
\end{align}
where $W$ is a standard Brownian motion. Note that increments of $X_\rho$ coincide with increments of $W$ when $X$ is positive. However, a reader should be careful:  As shown in \cite{Sticky-BM}, there is no strong solution  to the system of stochastic differential equations \eqref{eq:RSDE-sticky}. This is a very subtle and unexpected observation, because the process $\tilde X$ defined by \eqref{eq:delay} and \eqref{eq:formula_eq_Sk} is a function of $B.$

 Like the Skorokhod reflection map \eqref{eq:formula_eq_Sk}, formula  \eqref{eq:delay} can be used for RCLL functions or stochastic processes, including L\'evy processes. Moreover, if $X$ is a reflection of  spectrally positive L\'evy process or a reflected  diffusion, then the corresponding process $L$ will be a local time of $X$ (up to a multiplicative constant). So the process $\tilde X$ defined via time change \eqref{eq:delay} is naturally be called the delay at 0 of a Markov process $X$. See \cite{Amir, Bass} for further studies of a sticky reflected Brownian motion, and also also \cite{StickyLevy} on construction of sticky L\'evy processes (without reflection).

 The Skorokhod map $B\to X$ and the delayed Skorokhod reflection $B\mapsto \tilde X$ describe {\it continuous} exits from 0:  Discontinuity at the instant of exit from 0 can arise only from jumps in the driving process $B$, not the reflection itself.

There is  a discontinuous jump-type  reflection that corresponds to  non-local Feller-Wentzell boundary conditions in the semigroup theory of the diffusion processes  or to a jump entrance law in Ito's excursion theory. The first author in \cite{Pilipenko_Gen_refl}  proposed to consider (a deterministic)  {\it jump reflection} problem, when we replace $L$ in~\eqref{eq:basic-Skorohod} with $F(L)$. Here $F : [0, \infty) \to [0, \infty)$ is a given strictly increasing RCLL function with $F(0) = 0$ and $F(\infty) = \infty$:
\begin{equation}
\label{eq:general-Skorohod}
\bar X(t) = B(t) + F(L(t)),
\end{equation}
where $L$ is again a non-decreasing function that may increase only when $\bar X$ equals 0. More generally, in a later article \cite{Pilipenko_Gen_refl} it was shown that for every continuous $B$ there is a unique solution to this modified equation~\eqref{eq:general-Skorohod}, see formula \eqref{eq:gen-refl-fla}  in Lemma  \ref{lemma:existence} below for an explicit formula of a  solution. This explicit formula (without a formulation of a reflected problem) was used in \cite{Excursion} for a construction of Feller diffusions on a half-line, where $B$ was a Brownian motion and $F$ was a subordinator.

 Finally, note that this jump reflection can be combined with delay. Then we get sticky jump reflection. We do this in two steps, much like for sticky reflection $\tilde{X}$ above. First, we solve the equation~\eqref{eq:general-Skorohod} and get the jump reflection $\bar X$ without delay. Then we fix $\rho > 0$ and use $L$ and $\bar X$ to construct $\tilde{X}$ as in~\eqref{eq:delay}. The same trichotomy as above is present here: For $\rho = 0$, we are back in the case of jump reflection without delay. For $\rho \in (0, \infty)$, this is jump reflection with delay. Finally, as $\rho \to \infty$, we get absorbed  process. When it hits zero, it remains there forever.

\section{Definitions and the Main Result}

\subsection{Instantaneous jump reflection and delays} In this subsection, we provide explicit formulas for solutions on reflected problems, both classic~\eqref{eq:basic-Skorohod} and with jumps~\eqref{eq:general-Skorohod}. We state the definition again for completeness, although we discussed these reflection modes in the Introduction. 

\begin{defn}
Consider functions $x, F \in \mathcal D$ such that $x(0) \ge 0$, $F$ is strictly increasing, $F(0) = 0$, $F(\infty) = \infty$. A {\it solution to the generalized  Skorokhod problem} is a pair of functions  $y, l \in \mathcal D$, with the following properties: $y(t) \ge 0$ for all $t \ge 0$; 
$l $  is  non-decreasing function, and the following equality is true:
\begin{equation}
\label{eq:new-general}
y(t) = x(t) + F(l(t)),\quad t \ge 0.
\end{equation}
Moreover, the function $l$ can increase only when $y = 0$, i.e.
\begin{equation}
    \label{eq:restriction}
\int_0^\infty\1_{\{y(s)>0\}}\,\mathrm{d}l(s)=0.
\end{equation}
\label{defn:new-general}
We will call the function $x$ the {\it driving noise}, $y$ the {\it reflected process}, $F$ the {\it regulator},   $l$ the {\it boundary term}, and 
 denoted   $y = \mathcal S(x, F)$. 
\end{defn}


\begin{lemma}
\label{lemma:existence}
If $F$ is continuous or $x$ has no negative jumps, then there is a unique solution to \eqref{eq:new-general} and this solution is given by the formula
\begin{align}
\label{eq:gen-refl-fla}
\begin{split}
y(t) &= x(t) + F(l(t)) = x(t)+ F(F^{-1}(m(t)));\\
m(t) &= \sup\limits_{0 \leq s \leq t}(x(s)_-)=\inf\limits_{0 \leq s \leq t}(x(s))_-;\quad l(t) = F^{-1}(m(t)),
\end{split}
\end{align}
where $F^{-1}$ is the {\it generalized inverse} $F^{-1}(y) := \inf\{x\mid F(x) > y\}$.
\end{lemma}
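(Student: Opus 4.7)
The plan is to verify directly that the triple in~\eqref{eq:gen-refl-fla} satisfies Definition~\ref{defn:new-general}, and then establish uniqueness by a minimality argument.

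Set $m(t) := \sup_{0 \le s \le t} x(s)_-$, $l := F^{-1} \circ m$, $y := x + F(l)$. Monotonicity of $l$ is clear from that of $m$ and $F^{-1}$. For non-negativity of $y$, combine the elementary inequality $F(F^{-1}(u)) \ge u$ (a direct consequence of the definition of $F^{-1}$ together with right-continuity of $F$) with the classical Skorokhod inequality $x(t) + m(t) \ge 0$; their sum yields $y(t) \ge x(t) + m(t) \ge 0$.

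The subtle point is the complementarity condition $\int_0^\infty \mathbf{1}_{\{y(s) > 0\}}\,dl(s) = 0$, and this is where the hypothesis splits the analysis. If $F$ is continuous, then $F \circ F^{-1}(u) = u$ for every $u \ge 0$, so $y = x + m$ reduces to the classical Skorokhod reflection; the analogous relation $\int \mathbf{1}_{\{y > 0\}}\,dm = 0$ is standard in that setting, and it passes to $dl$ because $l$ can grow only where $m$ grows. If instead $x$ has no negative jumps, then $m$ is continuous; strict monotonicity of $F$ forces $F^{-1}$ to be continuous as well, its only non-smooth feature being the closed plateaus $[F(a-), F(a)]$ at jump points $a$ of $F$, and hence $l = F^{-1} \circ m$ is continuous. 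Consequently $dl$ has no atoms, and its support avoids the open plateau intervals on which $l$ is locally constant. At a support point $t$ outside every open plateau, $m$ is strictly increasing at $t$, which by right-continuity of $x$ and the definition of $m$ forces $x(t) = -m(t)$, while $F(F^{-1}(m(t))) = m(t)$ outside the plateau interiors; hence $y(t) = 0$. The countable set of plateau endpoints carries no $dl$-mass since $l$ is continuous there.

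For uniqueness, let $(y, l)$ be any solution. From $y \ge 0$ we deduce $F(l(s)) \ge -x(s)$ for each $s$, and monotonicity of $l$ and $F$ upgrade this to $F(l(t)) \ge \sup_{s \le t} x(s)_- = m(t)$. Strict monotonicity of $F$ combined with the definition of $F^{-1}$ then gives $l(t) \ge F^{-1}(m(t)) =: l^{*}(t)$. For the reverse inequality, set $\tau = \inf\{t : l(t) > l^{*}(t)\}$: if $\tau < \infty$, then on a right neighborhood of $\tau$ one has $l > l^{*}$, hence $F(l) > F(l^{*}) \ge m$, so $y = x + F(l) > x + m \ge 0$ strictly; yet $l - l^{*}$ must grow through this neighborhood, contradicting \eqref{eq:restriction}.

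The hardest step is the complementarity argument in the case when $x$ has no negative jumps: although $l$ itself is continuous, the composition $F \circ l$ jumps upward by $F(a) - F(a-) > 0$ precisely at the instant $m$ first reaches $F(a-)$, so $y$ leaps from $0$ to a strictly positive value at that moment. One must carefully exploit the continuity of $l$ to conclude that these countably many exceptional times, where $y > 0$ lies in the support of $dl$, carry no $dl$-mass.
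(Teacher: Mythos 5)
Your argument is correct in substance but takes a genuinely different route from the paper's, most visibly in the uniqueness step. The paper dispatches existence in one sentence (``straightforward, exactly as in the classic Skorokhod problem'') and concentrates on uniqueness via the integration-by-parts identity
\[
(y_1(t)-y_2(t))^2 \;=\; -2\int_{(0,t]} y_2\,\mathrm{d}F(l_1) \;-\; 2\int_{(0,t]} y_1\,\mathrm{d}F(l_2) \;-\; \sum_{0<s\le t}\Bigl(\Delta\bigl(F(l_1(s))-F(l_2(s))\bigr)\Bigr)^2 \;\le\; 0,
\]
where the cross terms vanish by the complementarity condition~\eqref{eq:restriction}; this Tanaka-style computation treats both hypotheses ($F$ continuous, or $x$ without negative jumps) uniformly and never needs to identify $l$ explicitly. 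Your approach instead characterizes the boundary term by minimality: every solution satisfies $l\ge F^{-1}\circ m$, and a first-disagreement argument excludes strict inequality. That is a legitimate alternative, closer to the classical ``minimal solution'' description of the Skorokhod map, and your careful verification that the explicit pair $(y,l)=(x+F(F^{-1}(m)),\,F^{-1}(m))$ actually satisfies~\eqref{eq:restriction} --- including the analysis of the plateaus of $F\circ F^{-1}$ from Lemma~\ref{lemma:tech} and the observation that plateau endpoints carry no $\mathrm{d}l$-mass --- supplies detail the paper omits entirely.

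One step of your uniqueness argument needs tightening. If $l(\tau)=l^*(\tau)$ at $\tau=\inf\{t: l(t)>l^*(t)\}$, the set $\{l>l^*\}$ need not contain a right neighborhood of $\tau$, and the phrase ``$l-l^*$ must grow through this neighborhood'' does not by itself contradict~\eqref{eq:restriction}, which constrains $\mathrm{d}l$, not $\mathrm{d}(l-l^*)$. The clean fix: note that $y=0$ forces $F(l)=-x\le m\le F(l^*)$, hence $l=l^*$, so $\mathrm{d}l$ is carried by $\{l=l^*\}$. Then for any $t_1$ with $l(t_1)>l^*(t_1)$, let $u=\sup\{s\le t_1: l(s)=l^*(s)\}$; one gets $l(t_1)=l(u)$ since $\mathrm{d}l$ puts no mass on $(u,t_1]$, and $l(u)=l^*(u)$ because a jump of $l$ at $u$ would occur where $y(u)>0$ (using $l(u-)\le l^*(u)$ and continuity of $l^*$ in the relevant case). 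This yields $l(t_1)\le l^*(t_1)$, the desired contradiction. With that repair your proof is complete.
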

For $F(t) \equiv t$, we get the classic Skorokhod reflection problem with the unique solution $y(t)  = x(t)+  m(t),$ see \cite{Tanaka, mapping}.
  If $F$ is continuous, we are back to classic Skorokhod reflection; in a way, the case of a continuous $F$ is not different from the case $F(t) = t$ for all $t \ge 0$. Indeed,  $F\circ F^{-1}(t)=t$ for strictly increasing and continuous  $F$. 
  Hence $y(t)  = x(t)+  m(t)$ again.
 \begin{remk}
The function  $m$  is continuous, since $x$ has no negative jumps. The function $F^{-1}$ is continuous and non-decreasing, since $F$ is strictly increasing, see \cite[Lemma 13.6.5.]{Whitt}.
Of course,  $F^{-1}$ is the standard inverse function if $F$ is strictly increasing and continuous.
\label{rmk:m-0}
\end{remk}

\begin{remk}
The condition that $x$ has no negative jumps or $F$ is continuous is important. Assume $x$ has, in fact, a negative jump at the point $t$. Then we can construct a setting when there is no solution.
Assume $y(t-) > 0$ for some $t$ but $\triangle x(t) < 0$ and $y(t-)+\triangle x(t)<0$. 
At time $t$, the jump of $x$ needs to be compensated by the function $F(l)$. If $F$ has jumps, then it may be impossible to find the compensation such that $y(t)=0$.
\end{remk}

 For continuous $x$, Lemma \ref{lemma:existence} 
 was shown in \cite{Pilipenko_Gen_refl}. The proof of the general case is postponed until the Appendix.  For convenience of readers we quote a result about the composition $F\circ F^{-1} $ used in \eqref{eq:gen-refl-fla},  taken also from  \cite{Pilipenko_Gen_refl}, 
and  illustrated in \textsc{Figure}~\ref{fig:nested}. 
\begin{lemma} Take a function $F\in \cD$ that is an increasing function with $F(0) = 0$ and $F(\infty) = \infty$. Consider the set $A_F := \mathbb R_+\setminus\overline{F(\mathbb R_+)}$. It is an open set, and therefore a countable union of intervals $A_F := \cup_{i}(\alpha_i, \beta_i)$. Then the function $G := F\circ F^{-1}$ satisfies:
$$
G(t) = 
\begin{cases}
    t,\, t \in \mathbb R_+\setminus A_F;\\
    \beta_i,\, t \in [\alpha_i, \beta_i).
\end{cases}
$$
\label{lemma:tech}
\end{lemma}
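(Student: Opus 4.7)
The plan is to unpack $A_F$ in terms of the jumps of $F$ and then evaluate $G = F \circ F^{-1}$ by direct computation at a representative point of each case. First I would identify the intervals $(\alpha_i, \beta_i)$ concretely. Since $F$ is strictly increasing and RCLL, at every jump point $s$ strict monotonicity gives $F(x) < F(s-)$ for $x < s$ and $F(x) \geq F(s)$ for $x \geq s$. Hence the open interval $(F(s-), F(s))$ is disjoint from the image $F(\mathbb R_+)$, its left endpoint $F(s-)$ is a left-limit of image points, and its right endpoint $F(s)$ is in the image, so this interval is a connected component of $A_F$. Conversely, for any $t_0 \in A_F$ the point $s := \inf\{x \mid F(x) > t_0\}$ satisfies $F(s-) \leq t_0$ (since $F(x) \leq t_0$ for $x < s$) and $t_0 < F(s)$ (otherwise right-continuity at $s$ would force $t_0 = F(s) \in F(\mathbb R_+)$, contradicting $t_0 \in A_F$), so $t_0$ belongs to such a jump interval. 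Consequently $\alpha_i = F(s_i-)$, $\beta_i = F(s_i)$, and $\mathbb R_+ \setminus \bigcup_i [\alpha_i, \beta_i) = F(\mathbb R_+)$.

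Next I would compute $G$ case by case. For $t \in [\alpha_i, \beta_i) = [F(s_i-), F(s_i))$, strict monotonicity gives $F(x) < F(s_i-) \leq t$ for $x < s_i$ and $F(x) \geq F(s_i) > t$ for $x \geq s_i$, so $\{x \mid F(x) > t\} = [s_i, \infty)$, whence $F^{-1}(t) = s_i$ and $G(t) = F(s_i) = \beta_i$. For $t = F(x_0) \in F(\mathbb R_+)$, strict monotonicity yields $\{x \mid F(x) > t\} = (x_0, \infty)$, so $F^{-1}(t) = x_0$ and $G(t) = t$. By Step~1 these two cases exhaust $\mathbb R_+$.

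I do not foresee a real obstacle: the whole proof is careful bookkeeping with the definition of the generalized inverse under strict monotonicity. The one subtlety worth flagging in the write-up is that the two branches of the displayed formula in the lemma overlap at the boundary points $t = \alpha_i$, which lie simultaneously in $\mathbb R_+ \setminus A_F$ and in $[\alpha_i, \beta_i)$; Step~2 shows the correct value there is $G(\alpha_i) = \beta_i$, so the first branch is to be read as $t \in F(\mathbb R_+) = \mathbb R_+ \setminus \bigcup_i [\alpha_i, \beta_i)$ rather than $t \in \mathbb R_+ \setminus A_F$.
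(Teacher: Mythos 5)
Your proof is correct. Note that the paper does not actually prove this lemma: it is quoted from \cite{Pilipenko_Gen_refl} and only illustrated by Figure~\ref{fig:nested}, so there is no in-paper argument to compare against; your direct verification (identifying the components of $A_F$ with the jump intervals $(F(s-),F(s))$ and then computing $F^{-1}$ explicitly on each of the two resulting pieces of $\mathbb R_+$) is the natural one and is sound. Two small comments. First, you silently strengthen ``increasing'' to ``strictly increasing''; that is the hypothesis under which the lemma is used everywhere in the paper (Definition~\ref{defn:new-general}, Lemma~\ref{lemma:existence}, assumption \ref{ass.c} of Theorem~\ref{thm:main}), so this is harmless, but it is worth stating explicitly since your component analysis (e.g.\ $\alpha_i\notin F(\mathbb R_+)$, $\{x : F(x)>F(x_0)\}=(x_0,\infty)$) genuinely relies on strictness. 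Second, your closing remark about the overlap of the two branches at $t=\alpha_i$ is a correct reading of the statement rather than an error on your part: $\alpha_i\in\overline{F(\mathbb R_+)}$ lies in both branches, the correct value is $G(\alpha_i)=\beta_i$, and Figure~\ref{fig:nested} (where $G(3)=4$, $G(8)=8.5$) confirms that the second branch is meant to take precedence, i.e.\ the first branch should be read as $t\in F(\mathbb R_+)=\mathbb R_+\setminus\bigcup_i[\alpha_i,\beta_i)$. The only step you compress is the identity $\mathbb R_+\setminus\bigcup_i[\alpha_i,\beta_i)=F(\mathbb R_+)$, which needs the observation that the only points of $\overline{F(\mathbb R_+)}\setminus F(\mathbb R_+)$ are the left endpoints $\alpha_i=F(s_i-)$; this follows from your Step~1 and is easy, but deserves a sentence in the write-up.
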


\begin{defn}
Fix a $\rho \in (0, \infty)$. Take the boundary term $l$ from Definition~\ref{defn:new-general} and define the time change $A(t) = t + \rho l(t)$ for $t \ge 0$. Plug $A^{-1}$ into the reflected function $y$ from Definition~\ref{defn:new-general}. Then $z(t) = y(A^{-1}(t))$ for $t \ge 0$ is called the {\it delayed jump-reflection} or {\it sticky jump-reflection}. The functions $x, F$ are called the {\it input} or {\it driving} function and the {\it regulator}, respectively, similarly to  Definition~\ref{defn:new-general}. The function $z$ called the {\it sticky jump-reflected function}, and the function $L$ defined as $L(t) = l(A^{-1}(t))$ for $t \ge 0$ is called the {\it sticky boundary term}. 
\end{defn}

We stress that a delayed Skorokhod reflection is not an alternative to jump reflection. Rather, these are two characteristics of a reflection: It can be with or without delay, and with or without jumps (that is, with $F$ continuous or discontinuous). All four options are possible. In addition, of course, we have absorbed  processes, but there $F$ does not matter anymore: It regulates only behavior at zero, and the resulting process stays at zero forever after it hits zero.

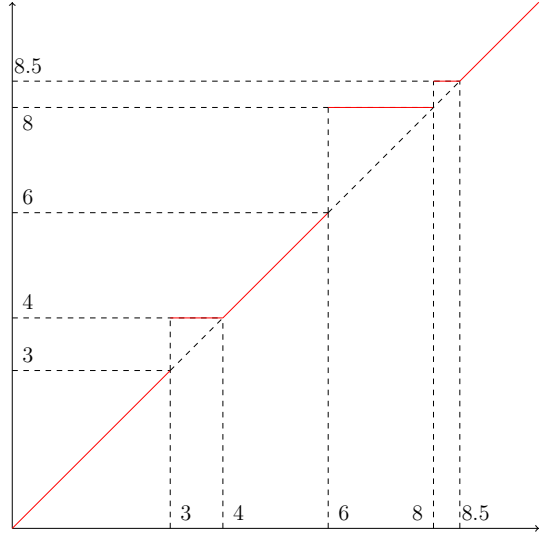
\begin{figure}[t]
\scalebox{0.7}{
\begin{tikzpicture}
\draw [->] (0, 0) -- (0, 10);
\draw [->] (0, 0) -- (10, 0);
\draw (0.3, 3.3) node {$3$};
\draw (3.3, 0.3) node {$3$};
\draw [dashed] (0, 3) -- (3, 3);
\draw [dashed] (3, 0) -- (3, 3);
\draw (0.3, 4.3) node {$4$};
\draw (4.3, 0.3) node {$4$};
\draw [dashed] (0, 4) -- (4, 4);
\draw [dashed] (4, 0) -- (4, 4);
\draw [color = red] (0, 0) -- (3, 3);
\draw [dashed] (3, 3) -- (3, 4);
\draw [dashed] (3, 3) -- (4, 4);
\draw [color = red] (3, 4) -- (4, 4);
\draw [color = red] (4, 4) -- (6, 6);
\draw (0.3, 6.3) node {$6$};
\draw (6.3, 0.3) node {$6$};
\draw [dashed] (0, 6) -- (6, 6);
\draw [dashed] (6, 0) -- (6, 6);
\draw (0.3, 7.7) node {$8$};
\draw (7.7, 0.3) node {$8$};
\draw [dashed] (0, 8) -- (8, 8);
\draw [dashed] (8, 0) -- (8, 8);
\draw (0.3, 8.8) node {$8.5$};
\draw (8.8, 0.3) node {$8.5$};
\draw [dashed] (0, 8.5) -- (8.5, 8.5);
\draw [dashed] (8.5, 0) -- (8.5, 8.5);
\draw [dashed] (6, 6) -- (6, 8);
\draw [color = red] (6, 8) -- (8, 8);
\draw [dashed] (8, 8) -- (8, 8.5);
\draw [dashed] (6, 6) -- (8.5, 8.5);
\draw [color = red] (8, 8.5) -- (8.5, 8.5);
\draw [color = red] (8.5, 8.5) -- (10, 10);
\end{tikzpicture}
}
\caption{Lemma~\ref{lemma:tech}. The function $G$ with $A_F = (3, 4)\cup(6, 8)\cup (8, 8.5)$.}
\label{fig:nested}
\end{figure}

\subsection{The switch problem} Here we define the regime switching problem in the general case, for arbitrary deterministic RCLL functions. This can be applied to Brownian motion, L\'evy processes, or any other stochastic processes.

\begin{defn}
Fix a {\it gap} $\delta > 0$, the {\it driving function} $x \in \mathcal D$, $x(0)\geq 0$, and the {\it regulating function} $F \in\mathcal D$  with $F(0) = 0$. A {\it solution to the switch problem} is a function $y \in \mathcal D$ with:
\begin{itemize}
\item times 
$0 = \tau_0 \leq \rho_1 \leq \tau_1 \leq \rho_2 \leq \ldots \leq \infty$
defined as $\rho_k := \inf\{t \geq \tau_{k-1}\mid y(t) \leq -\delta\}$ and $\tau_k := \inf\{t \geq \rho_k\mid y(t) \ge 0\}$ for $k = 1, 2, \ldots$;
\item two disjoint subsets $B = \cup_{k=1}^{\infty}[\rho_k, \tau_k)$, $A = \cup_{k=1}^{\infty}[\tau_{k-1}, \rho_k)$ of $\mathbb R_+$ such that $A\cup B = \mathbb R_+$ and the corresponding occupation times 
$$
T_A(t) = \mathrm{mes}(A\cap [0, t]),\quad T_B(t) = \mathrm{mes}(B\cap [0, t])
$$
(and therefore $T_A(t) + T_B(t) \equiv t$),
\end{itemize}
which satisfy
\bel{eq:main_eq_def_system}
y(t)=x(T_A(t))+F(T_B(t)), \ t\geq 0,
\ee
\label{defn:switch}
\end{defn}

\begin{remk}
We set $\inf\varnothing=+\infty$.
\end{remk}

\begin{remk}
From~\eqref{eq:main_eq_def_system} it follows that: $y(t) - F(T_B(t))$ is constant on each interval in $B$; and $y(t) - x(T_A(t))$ is constant on each interval in $A$.
\end{remk}

Let us explain this switch problem in plain English. 

\begin{enumerate}
\item We start with the input function $x$; the output function $y$ is equal to $x$ until it reaches below $-\delta$. This is the {\it normal regime} $A$. Note that the function $x$ can have jumps, so the input function $x$ (and together with it the output function $y$) might reach $(-\infty, -\delta]$ via a negative jump rather than a continuous path. Assume this happens at time $\rho_1$. This is the first piece of the input function.

\item Then we switch to the {\it boundary regime} $B$. We use the {\it regulating function} $F$, or simply {\it regulator}, starting from zero argument, to increase the output function $y$ until it reaches above $0$. The increments of the output function $y$ coincide with the increments of the regulator $F$. Again, note that the output function $y$ can reach $[0, \infty)$ by a jump rather than continuous movement. We stop at time $\tau_1$. So the regulator $F$ stops at $\tau_1 - \rho_1$. This is the first piece of the regulator. 

\item Next, we switch to the {\it normal regime} $A$ again. We govern the output function $y$ by the input function $x$: That is, the increments of the input and output functions coincide. We use the second piece of the input function $x$, starting from the point where we finished the first piece, in part 1. This happens until, as in part 1, the output function hits $(-\infty, -\delta]$. Assume this happens at the moment $\rho_2$.

\item Then we switch again to the {\it boundary regime} $B$ again, as in part 2. We govern the output function $y$ by the regulator $F$: The increments of the regulator and output functions coincide. We use the second piece of the regulator $F$, which starts at the time when the first piece ended. We continue until the output function hits $[0, \infty)$, and then switch to the normal regime again. 
\end{enumerate}

To summarize, we cut the graphs of input function $x$ and the regulator $F$ in pieces: We attach the first piece of $x$, then the first piece of $F$, then the second piece of $x$, then the second piece of $F$, and so on. 
By construction, for any pair $(x, F) \in \mathcal D^2$, and any $\delta > 0$, there exists a unique solution $y = \mathcal G_{\delta}(x, F)$.

See an example with a piecewise linear input function $x$ and another piecewise linear function as regulator $F$, and the output in \textsc{Figure}~\ref{fig:output}, where $\delta = 0.5$. 

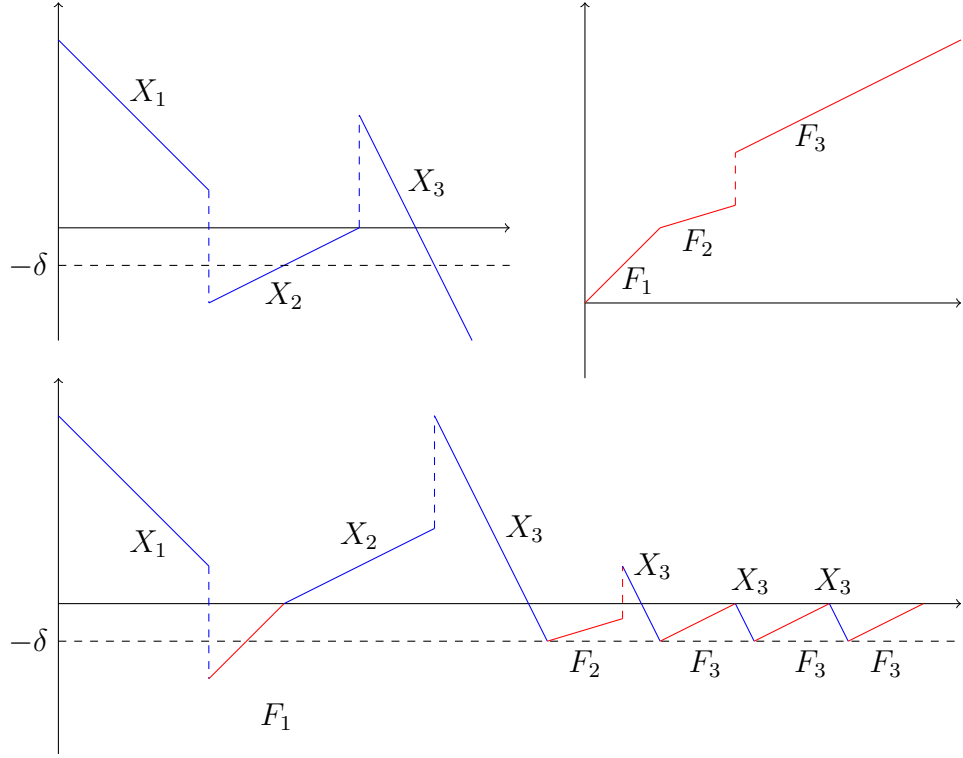
\begin{figure}[t]
\begin{tikzpicture}
\draw (1.2, 1.8) node {$X_1$};
\draw (3, -0.9) node {$X_2$};
\draw (4.9, 0.6) node {$X_3$};
\draw (-0.4, -0.5) node {$-\delta$};
\draw [->] (0, -1.5) -- (0, 3);
\draw [->] (0, 0) -- (6, 0);
\draw [color = blue] (0, 2.5) -- (2, 0.5);
\draw [dashed, color = blue] (2, 0.5) -- (2, -1);
\draw [color = blue] (2, -1) -- (4, 0);
\draw [dashed, color = blue] (4, 0) -- (4, 1.5);
\draw [color = blue] (4, 1.5) -- (5.5, -1.5);
\draw [dashed] (0, -0.5) -- (6, -0.5);
\draw [->] (7, -2) -- (7, 3);
\draw [->] (7, -1) -- (12, -1);
\draw [color = red] (7, -1) -- (8, 0);
\draw [color = red] (8, 0) -- (9, 0.3);
\draw [dashed, color = red] (9, 0.3) -- (9, 1);
\draw [color = red] (9, 1) -- (12, 2.5);
\draw (7.7, -0.7) node {$F_1$};
\draw (8.5, -0.2) node {$F_2$};
\draw (10, 1.2) node {$F_3$};
\draw [->] (0, -7) -- (0, -2);
\draw [->] (0, -5) -- (12, -5);
\draw [dashed] (0, -5.5) -- (12, -5.5);
\draw [color = blue] (0, -2.5) -- (2, -4.5);
\draw [dashed, color = blue] (2, -4.5) -- (2, -6);
\draw [color = red] (2, -6) -- (3, -5);
\draw [dashed, color = blue] (5, -4) -- (5, -2.5);
\draw [color = blue] (3, -5) -- (5, -4);
\draw [color = blue] (5, -2.5) -- (6.5, -5.5);
\draw [color = red]  (6.5, -5.5) -- (7.5, -5.2);
\draw [dashed, color = red] (7.5, -5.2) -- (7.5, -4.5);
\draw [color = blue] (7.5, -4.5) -- (8, -5.5);
\draw [color = red] (8, -5.5) -- (9, -5);
\draw [color = blue] (9, -5) -- (9.25, -5.5);
\draw [color = red] (9.25, -5.5) -- (10.25, -5);
\draw [color = blue] (10.25, -5) -- (10.5, -5.5);
\draw [color = red] (10.5, -5.5) -- (11.5, -5);
\draw (1.2, -4.2) node {$X_1$};
\draw (2.9, -6.5) node {$F_1$};
\draw (4, -4.1) node {$X_2$};
\draw (6.2, -4) node {$X_3$};
\draw (7, -5.8) node {$F_2$};
\draw (7.9, -4.5) node {$X_3$};
\draw (8.6, -5.8) node {$F_3$};
\draw (9.2, -4.7) node {$X_3$};
\draw (10, -5.8) node {$F_3$};
\draw (10.3, -4.7) node {$X_3$};
\draw (11, -5.8) node {$F_3$};
\draw (-0.4, -5.5) node {$-\delta$};
\end{tikzpicture}
\caption{The switch problem. Top left: Input function $x$. Top right: regulator $F$. Bottom: Output $y = \mathcal S_{\delta}(x, F)$. All functions are right-continuous.}
\label{fig:output}
\end{figure}

Generally, we cannot consider the switch problem for $\delta = 0$ and switch the regime when the process enters and exits $(0,\infty)$ (or enters and exits $[0,\infty)$). The only difficulty is treating  the switch problem  at 0. For example, it is unclear how to treat the definition if regime $A$   pushes  down and regime $B$ pushes up, or if functions $x$ and $F$ behaves like excursions of a Brownian motion having no intervals of monotonicity. We will define the solution to the switching problem $\mathcal G_{\delta}(x, F)$ with $\delta=0$ if contradictions in the definition do not appear. 
The following cases are examples:
\begin{enumerate}[label = (\alph*)]
\item $x$ and $F$ are step-functions with finitely many jumps in any $[0, t]$; regime $A$ is selected  if $y(t)> 0$; regime $B$ is selected if $y(t)\leq  0$.
\item $F$ is a non-decreasing step function with finitely many jumps in any $[0, t]$; regime $A$ is selected if $y(t)>0$; regime $B$ is selected if $y(t)\leq 0$.
\item Same as above, where  regimes $A$ or $B$ are selected  if $y(t)\geq  0$  or $y(t)< 0,$ respectively.  
\item Any   function $y$ constructed
from pieces of  $x$ and $F$ under condition that $y$
never hits 0 and has  finitely many crossings of 0  during any $[0, t]$.
\end{enumerate}
We will apply the switch  problem $\mathcal G_{0}(x, F)$ to the cases when:
\begin{enumerate}[label = (\alph*)]
    \item $x$ and $F$ are independent compound Poisson processes;
    \item $ x(t)=x_{[t]}, F(t)=F_{[t]}, $ where $(x_n)$ and $(F_n)$ are independent random walks;
    \item $x$ is a L\'evy process, and $F$ is a jump-type subordinator with finite  L\'evy measure.
\end{enumerate}
 
\begin{remk}
 If we consider the problem  $y=\mathcal G_{0}(x, F)$, then we will always assume that $x$ and $F$ are such that the switch problem $\mathcal G_{0}(x, F)$  is well defined.
\end{remk}

\subsection{Main results} 
 
For a non-decreasing function $f : \mathbb R_+ \to \mathbb R$ we say that $s \in \mathbb R_+$ is a {\it growth point} if $f(t) > f(s)$ for $t > s$ and $f(t) < f(s)$ for $t < s$.
\begin{thm}\label{thm:main}
Let $(x_n), (F_n)$ be two sequences in $\mathcal D$. Define 
\begin{equation}
    \label{eq:running-maximum}
    m_0(t):=\sup\limits_{s\in[0,t]}(x_0(s)_-).
\end{equation}
Take two sequences $(\delta_n)$ or $(\varrho_n)$ of real numbers such that for every $n$ we have $\delta_n \ge 0$ and $\varrho_n > 0$, $\delta_n \to 0$ and $\varrho_n \to \varrho \in [0, \infty]$. Consider a sequence $(y_n) \subseteq \mathcal D$ of functions defined as
$$
y_n  := \mathcal G_{\delta_n}(x_n, F_n(\cdot/\varrho_n)),\, t \ge 0.
$$
Assume that

\begin{enumerate}[label = (\alph*)]
    \item\label{ass.a} $x_n(0)\geq 0, F_n(0)=0,$ $n = 0, 1, 2, \ldots$;
    \item \label{ass.b}  $x_n \to x_0$ in $\mathcal D$ and $F_n  \to F_0 $ in $\mathcal D$; 
    \item \label{ass.c}  $F_0$ is strictly increasing, $F_0(\infty) = \infty$;
    \item\label{ass.d}  $x_0$ does not have negative jumps;
    \item \label{ass.e} if $\alpha \ge 0$ and $t\geq 0$ are such that $F_0(\alpha) \ne F_0(\alpha-) = m_0(t)$, where $m_0$ is defined in~\eqref{eq:running-maximum}, then $t$ is a growth point of $m_0$. 
\end{enumerate}

\noindent Then $y_n \to y_0$ in $\mathcal D$, where $y_0$ depends on $\varrho$:

\begin{enumerate}
\item {\normalfont Classic reflection:} $\varrho = 0$. Then $y_0 =\mathcal S(x_0, F_0)$. 
\item {\normalfont Delayed reflection:} $\varrho\in (0,\infty)$. Then 
$$
y_0(t)=\mathcal S(x_0, F_0)(A_\varrho^{-1}(t)),\quad A_\varrho(t):= t +\varrho F_0^{-1}(m_0( t) ).
$$
\item {\normalfont Absorption:} $\varrho=\infty$. Then  $y_0(t)=\mathcal S(x_0, F_0)(t\wedge \sigma)$, assuming
$$
\sigma := \inf\{s\geq 0\ | \ x_0(s)=0\}=\inf\{s\geq 0\ | \ x_0(s)<0\}.
$$
\end{enumerate}
\end{thm}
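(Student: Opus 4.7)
The plan is to reduce the switch problem to the generalized Skorokhod problem of Lemma~\ref{lemma:existence} via a change to an inner clock, solve it there, and then invert back to real time. For each $n$, let $T^n_A, T^n_B$ be the occupation times from Definition~\ref{defn:switch} and set $l_n(t) := T^n_B(t)/\varrho_n$; by construction
\[
y_n(t) = x_n(T^n_A(t)) + F_n(l_n(t)), \qquad T^n_A(t) + \varrho_n l_n(t) = t.
\]
Let $\alpha_n$ denote the right-continuous inverse of $T^n_A$ and set $z_n := y_n \circ \alpha_n$, $\hat l_n := l_n \circ \alpha_n$. In the inner clock $u$ every boundary-regime excursion collapses into a single upward jump of $\hat l_n$, yielding
\[
z_n(u) = x_n(u) + F_n(\hat l_n(u)), \qquad \alpha_n(u) = u + \varrho_n \hat l_n(u),
\]
with $\hat l_n$ non-decreasing and jumping at $u$ exactly when $x_n(u) + F_n(\hat l_n(u-)) \le -\delta_n$, by the smallest amount making $z_n(u) \ge 0$.

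The central step will be to pass $n \to \infty$ on the inner clock: I claim $(z_n, \hat l_n) \to (\mathcal S(x_0, F_0), F_0^{-1}\circ m_0)$ in $\mathcal D^2$. The jump rule above forces the recursion $\hat l_n(u_k) = F_n^{-1}(-x_n(u_k))$ at each switching instant $u_k$, while between switches the sub-threshold bound $x_n(u) > -\delta_n - F_n(\hat l_n(u-))$ holds, and a telescoping argument gives
\[
\hat l_n(u) = F_n^{-1}(m_n(u)) + O(\delta_n) + (\text{overshoot}), \qquad m_n(u) := \sup_{0 \le s \le u}(x_n(s))_-.
\]
The overshoot is controlled by the negative jumps of $x_n$, which vanish in the $J_1$-topology because $x_0$ has no negative jumps (assumption~\ref{ass.d}). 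Strict monotonicity of $F_0$ (assumption~\ref{ass.c}) makes $F_0^{-1}$ continuous, so $F_n^{-1}\circ m_n \to F_0^{-1}\circ m_0 = \hat l_0$ in $\mathcal D$. Substituting into $z_n = x_n + F_n(\hat l_n)$ then gives $z_n \to \mathcal S(x_0, F_0)$, with assumption~\ref{ass.e} precisely guaranteeing $J_1$-continuity of the composition $F_0 \circ F_0^{-1} \circ m_0$: it excludes the single pathological alignment, governed by Lemma~\ref{lemma:tech}, in which a jump of $m_0$ lands at the left endpoint of a plateau of $F_0^{-1}$ (i.e.\ a jump of $F_0$) at a non-growth point of $m_0$.

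Finally I invert the outer clock, using $y_n(t) = z_n(T^n_A(t))$ and that $T^n_A$ is the right-inverse of $\alpha_n(u) = u + \varrho_n \hat l_n(u)$. If $\varrho_n \to 0$, then $\alpha_n(u) \to u$ locally uniformly (since $\hat l_n$ is locally bounded), so $T^n_A(t) \to t$ and $y_n \to \mathcal S(x_0, F_0)$. If $\varrho_n \to \varrho \in (0, \infty)$, then $\alpha_n \to A_\varrho$, which is strictly increasing in $u$, so $T^n_A \to A_\varrho^{-1}$ and $y_n \to \mathcal S(x_0, F_0) \circ A_\varrho^{-1}$. If $\varrho_n \to \infty$, then $\alpha_n(u) \to \infty$ wherever $\hat l_0(u) > 0$, i.e.\ on $(\sigma, \infty)$, and $\alpha_n(u) \to u$ on $[0, \sigma)$; thus $T^n_A(t) \to t \wedge \sigma$ and $y_n(t) \to \mathcal S(x_0, F_0)(t \wedge \sigma)$. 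The two equivalent descriptions of $\sigma$ in the hypothesis ensure that $x_0$ actually crosses $0$ at $\sigma$, so the limit is unambiguous.

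The main obstacle is the middle step. Carefully quantifying the overshoot and transferring it through the $J_1$-topology requires tracking the $J_1$-alignment of jumps of $x_n$ with the continuous-in-the-negative-direction limit $x_0$; and preserving $J_1$-convergence under composition with a potentially discontinuous $F_0$ is exactly what assumption~\ref{ass.e} is tailored to provide, via the structural description of $F_0 \circ F_0^{-1}$ in Lemma~\ref{lemma:tech}.
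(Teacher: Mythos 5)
Your inner-clock reduction is an attractive reframing (and morally equivalent to the paper's strategy of tracking the occupation times $T_{A_n},T_{B_n}$ and sandwiching $F_n(T_{B_n}/\varrho_n)$ between bounds involving $m_n(T_{A_n})$), but the proposal leaves the two steps that actually carry the difficulty unproved, and one of its stated identities is false. First, the claimed recursion $\hat l_n(u_k)=F_n^{-1}(-x_n(u_k))$ does not hold: $F_n$ is not assumed monotone for finite $n$, so $F_n^{-1}$ is not meaningful as used, and even for monotone $F_n$ the inverse must be taken relative to the portion of $F_n$ already consumed, not globally. What is true is a two-sided estimate
\begin{equation*}
F_n(T_{B_n}(t)-)\;\le\; m_n(T_{A_n}(t)),\qquad
F_n(T_{B_n}(t))\;\ge\; -\delta_n+m_n(T_{A_n}(t))-(\text{neg.\ jumps of }x_n)-(\text{backtracking of }F_n),
\end{equation*}
and the third error term, measuring the failure of monotonicity of $F_n$ (which your telescoping argument ignores entirely), must be shown to vanish; this is exactly the content of Lemmas~\ref{lemma:est-1}, \ref{lemma:est-2}, \ref{lemma:jumps}, and~\ref{lemma:increase}, proved by induction over the switching intervals. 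Second, even granted $\hat l_n\to F_0^{-1}\circ m_0$ locally uniformly, the $J_1$-convergence $F_n(\hat l_n)\to F_0(F_0^{-1}(m_0))$ is not a consequence of continuity of $F_0^{-1}$: since $F_0$ may jump, one must check the Ethier--Kurtz criterion (Lemma~\ref{prop:criterion}) at every $t_0$ for which $F_0^{-1}(m_0(t_0))$ is a jump point of $F_0$, splitting into the cases $m_0(\cdot)=F_0(\alpha-)$ versus $m_0(\cdot)\in(F_0(\alpha-),F_0(\alpha)]$; assumption~\ref{ass.e} is used in the first case together with the lower bound above to pin the limit to $F_0(\alpha)$. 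Naming the issue ("preserving $J_1$-convergence under composition") is not the same as resolving it, and note also that $m_0$ is continuous, so your description of the pathology as "a jump of $m_0$" is not quite what~\ref{ass.e} excludes.

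Separately, the inversion step rests on the identity $y_n(t)=z_n(T^n_A(t))$, which fails on every boundary-regime interval $[\rho_k,\tau_k)$: there $T^n_A$ is constant and $z_n(T^n_A(t))=y_n(\tau_k)\ge 0$, whereas $y_n(t)\le 0$. When $F_0$ has a macroscopic jump the two functions differ by a macroscopic amount on that interval, and one must argue that the interval's length tends to zero (so the discrepancy is absorbed by the $J_1$ time change) --- this uses strict monotonicity of $F_0$ and the smallness of $\delta_n$ plus the overshoot, and is again an argument, not a bookkeeping step. In short, the skeleton is sound and genuinely parallel to the paper's (sandwich, identify $S_1=\varrho F_0^{-1}(m_0(S_0))$, invert the clock, add the two pieces via absence of common discontinuities), but the sandwich estimates, the composition convergence at jumps of $F_0$, and the reconciliation of $y_n$ with its inner-clock version are all asserted rather than proved, and these constitute essentially the entire technical content of the theorem.
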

 \begin{remk}
     For the case $\varrho = \infty$, only conditions (a), (b), and (d) in this theorem suffice. 
 \end{remk}
 \begin{remk}
 Notice that $\lim_{\varrho\to0+}A^{-1}_\varrho(t)=t$ and 
 $\lim_{\varrho\to+\infty}A^{-1}_\varrho(t)=t\wedge \sigma$, so the limit function $y_0$ can be formally written as $\mathcal S(x_0, F_0)(A_\varrho^{-1}(t))$  in 
 cases $\varrho=0$ and $\varrho=\infty$ too.
  \end{remk}
\begin{remk}
The regime switch reminds a  {\it penalty method}: a  reflected Brownian motion on the half-line can be obtained as  a limit of  Brownian motions on $\mbR$ with a large positive drift in a neighborhood of 0 or below 0, for example, $n^21_{[0, 1/n)}(x)$ or $n 1_{(-\infty, 0]}(x)$, which pushes to $[0, \infty)$ as $n$ increases, see, for example, \cite{Bruggeman, Slo}. 
\end{remk}

\begin{corl}\label{corl:main}
Let $(x_n), (F_n)$ be two sequences of RCLL processes that for almost every $\omega$ satisfy assumptions \ref{ass.a}, \ref{ass.c}, \ref{ass.d}, and \ref{ass.e} of Theorem \ref{thm:main}. Assume also that  sequences  $(\delta_n)$, $(\varrho_n)$, and $(y_n)$ are defined as in Theorem \ref{thm:main}, and also 
$(x_n, F_n)\Rightarrow  (x_0, F_0)$ as $n\to\infty$ in $\mathcal D$. Then we have convergence in distribution
\bel{eq:307}
y_n := \mathcal G_{\delta_n}(x_n, F_n(\cdot/\varrho_n))\Rightarrow S(x_0,F_0)(A_\varrho^{-1}).
\ee
where $y_0$ depends on $\varrho$ similarly to Theorem \ref{thm:main}. 
\end{corl}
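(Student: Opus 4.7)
The strategy is the standard lift from a deterministic continuity statement to weak convergence via Skorokhod's representation theorem. Since $\mathcal D^2$ endowed with the product $J_1$-topology is Polish and, by hypothesis, $(x_n, F_n) \Rightarrow (x_0, F_0)$ in this space, Skorokhod's representation theorem produces a common probability space $(\tilde\Omega, \tilde{\mathcal F}, \tilde{\mathrm P})$ and random elements $(\tilde x_n, \tilde F_n)$ with the same joint law as $(x_n, F_n)$ such that $(\tilde x_n, \tilde F_n) \to (\tilde x_0, \tilde F_0)$ almost surely in $\mathcal D^2$. Projecting onto coordinates yields $\tilde x_n \to \tilde x_0$ and $\tilde F_n \to \tilde F_0$ in $\mathcal D$ almost surely, and since assumptions \ref{ass.a}, \ref{ass.c}, \ref{ass.d}, \ref{ass.e} are distributional, they continue to hold $\tilde{\mathrm P}$-almost surely for the limit pair $(\tilde x_0, \tilde F_0)$.

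Next, fix $\omega$ in the full-measure set on which the above conclusions hold. Theorem~\ref{thm:main} applies to the deterministic data $(\tilde x_n(\omega), \tilde F_n(\omega), \delta_n, \varrho_n)$ and yields
$$
\tilde y_n(\omega) := \mathcal G_{\delta_n}\bigl(\tilde x_n(\omega),\, \tilde F_n(\cdot/\varrho_n)(\omega)\bigr) \longrightarrow \mathcal S(\tilde x_0, \tilde F_0)\bigl(A_\varrho^{-1}\bigr)(\omega)
$$
in $\mathcal D$, with the limit interpreted by the appropriate case $\varrho \in \{0\}$, $\varrho \in (0, \infty)$, or $\varrho = \infty$. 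Provided the maps $(x, F) \mapsto \mathcal G_{\delta_n}(x, F(\cdot/\varrho_n))$ and $(x, F) \mapsto \mathcal S(x, F)(A_\varrho^{-1})$ are Borel on $\mathcal D^2$, the distributional identities $\tilde y_n \stackrel{d}{=} y_n$ and $\mathcal S(\tilde x_0, \tilde F_0)(A_\varrho^{-1}) \stackrel{d}{=} \mathcal S(x_0, F_0)(A_\varrho^{-1})$ follow from the corresponding identities for $(\tilde x_n, \tilde F_n)$ and $(x_n, F_n)$. Combining the pathwise convergence above with these identities yields \eqref{eq:307}.

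The only technical point to verify is the Borel measurability claim. For $\mathcal S$ this is immediate from the explicit formula \eqref{eq:gen-refl-fla}, since the running infimum of an RCLL function and the generalized inverse $F^{-1}$ are measurable operations in their arguments. For $\mathcal G_\delta$, the switching times $\rho_k$, $\tau_k$ of Definition~\ref{defn:switch} are defined inductively as hitting times of adapted RCLL functionals of $(x, F)$, hence measurable, and the output $y$ is then assembled by the concatenation prescribed in \eqref{eq:main_eq_def_system}, which is again a Borel operation on each compact time interval. Measurability of the random time change $A_\varrho$ and its inverse is standard. Thus the main obstacle is bookkeeping rather than a substantive new idea: the heavy analytic lifting has already been performed in Theorem~\ref{thm:main}, and Corollary~\ref{corl:main} follows by a routine probabilistic transfer.
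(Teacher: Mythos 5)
Your proof is correct and follows essentially the same route as the paper: Skorokhod's representation theorem to obtain almost surely convergent copies, followed by a pathwise application of Theorem~\ref{thm:main}. The only difference is that you also address the Borel measurability of the maps $\mathcal G_{\delta_n}$ and $\mathcal S$, a point the paper's proof leaves implicit.
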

\begin{proof}
By the Skorokhod representation theorem, there are copies $(\wt x_n, \wt F_n)\overset{d}= (x_n, F_n), n\geq 0$
such that we have convergence almost surely:
\[
(\wt x_n, \wt F_n)\to (\wt x_0, \wt F_0),\quad n\to\infty.
\]
Hence almost surely we have convergence $\mathcal G_{\delta_n}(\wt x_n, \wt F_n(\cdot/\varrho_n))\rightarrow \cS(\wt x_0,\wt F_0)(\wt A_\varrho^{-1}), n\to\infty$. This latter convergence implies \eqref{eq:307}. 
\end{proof}

\section{Examples and Applications}

\subsection{Driving Brownian motion} 
Assume that $x_n=w, n\geq 1$, where $w$ is a Brownian motion, $\varrho_n=1, n\geq 1,$   $F_n(t)=at,$ where $a>0,$ and $(\delta_n)\subset(0,\infty)$ be any sequence of positive numbers that converges to 0. That is, the process $y_n$ is a continuous process that moves like a Brownian motion before hitting $-\delta_n$, then switches a regime and moves up with constant speed until hitting 0, then switches regime for a Brownian motion until hitting $-\delta_n$, etc.   We have
$x_0(t)=w(t),\ F_0(t)=at, \ F_0(F_0^{-1}(t))= t$. Then $\mathcal S(x_0, F_0)=\mathcal S(x_0)=\mathcal S(w)=w+m_0 $ is the reflected Brownian motion, and $A_\varrho(t) := t+\varrho F_0^{-1}(m_0(t)) = t + a^{-1}m_0(t)$. Hence $y_0=(w+m_0)(A_\varrho^{-1})$ is a sticky reflected Brownian motion. If we assume $F_n(t)=a_nt,$ where $\lim_{n\to\infty}a_n=+\infty,$ i.e., the regime below 0 pushes up strongly, then the limit process will be the usual reflected Brownian motion without any delay.

Note that the process $y_n$ has the same distribution as a solution to the stochastic equation
\[
\mathrm{d} Y_n(t) =\1_{\{t\in A_n\}}\,\mathrm{d}w(t) + a\,\1_{\{t\in B_n\}}\,\mathrm{d} t, \ t\geq 0,
\]
with $Y_n(0)=0.$ Here sets $A_n$ and $B_n$ are defined as follows:
\begin{align*}
A_n &= \cup_{k=0}^\infty [\tau^{(n)}_k, \rho^{(n)}_{k+1}), \quad 
B_n = \cup_{k=1}^\infty [\rho^{(n)}_k, \tau^{(n)}_k), \\
\tau^{(n)}_0 &:= 0, \quad \rho^{(n)}_{k+1}:=\inf\{t\geq \tau^{(n)}_k\ :\ Y_n(t)\leq -\delta_n\},\quad \tau^{(n)}_{k}:=\inf\{t\geq \rho^{(n)}_k\ :\ Y_n(t)\geq 0\}.
\end{align*}

\subsection{Storage processes}
Take   Poisson processes  ${N_{ \lambda_n}(t)}$ and $\hat N_{\hat \lambda_n} (t)$  with intensities $\lambda_n$ 
and $\hat \lambda_n$,  respectively. Take sequences of non-negative i.i.d. random variables $(\xi_k^{(n)})_{k\geq 1}$ and   $(\zeta_k^{(n)})_{k\geq 1}$. We will assume that all processes and sequences are jointly independent.
Consider two compound Poisson processes $x_n$ and $F_n$ with positive jumps, where $x_n$ has an additional negative drift term:
$$
x_n(t)=\sum\limits_{k=1}^{N_{ \lambda_n} (t)}\xi_k^{(n)}-r_nt,\quad F_n(t)=\sum_{k=1}^{\hat N_{\hat \lambda_n} (t)}\zeta_k^{(n)}
$$
Note that $F_n$ is a jump-type subordinator. Then the process  $y_n(t)=\mathcal G_{0}(x_n,  F_n)$ is a Markov storage process whose behavior can be described as follows: 

\begin{enumerate}
\item If $y_n(t)>0$, then it  has a negative drift at rate $r_n$  and jumps up in a Poisson clock with intensity $\lambda_n$; the value of the $k$th jump is  $\xi_k^{(n)}$.
\item If $y_n(t)=0$, then $x_n$ stays at 0 the exponential time $\mathrm{Exp}(\hat \lambda_n)$ and then jumps up with the distribution  $\zeta_k^{(n)}$.
\end{enumerate}

It is well  known that under assumptions
\begin{align*}
\lim_{n\to\infty}(\lambda_n\E  \xi_k^{(n)} -r_n) &= \mu,\\
\lim_{n\to\infty} \lambda_n\E\left[(\xi_k^{(n)})^2\right] &= \sigma^2>0,\\
\lim_{n\to\infty}\lambda_n\E\left[(\xi_k^{(n)})^2\1_{\{\xi_k^{(n)}>\ve\}}\right] &= 0,\quad \ve > 0;
\end{align*}
we have convergence in distribution $x_n(t)\Rightarrow \mu t+\sigma w(t)$ as $n\to\infty$ in $\cD,$ where $w$ is a standard Brownian motion. Assume that there are  normalizing constants  $\gamma_n$ such that
\bel{eq:352}
 \sum_{k=1}^{[\gamma_n t]}\zeta_k^{(n)}\Rightarrow t, \quad n\to\infty.
\ee
Suppose that there exists a limit $\hat\lambda_n/\gamma_n \to \varrho \in [0,\infty]$. Without loss of generality we may assume that $\hat N_{\hat \lambda_n}(t)= \hat N({\hat \lambda_n} t),$ where $\hat N$ is a fixed Poisson process with intensity 1. It is well known that $\hat N({\hat \gamma_n}t)/\hat\gamma_n \to t$ uniformly on every $[0, T]$ a.s. Thus
\[
\sum_{k=1}^{\hat N(\hat \lambda_n t)}\zeta_k^{(n)}=
\sum_{k=1}^{\hat N(\gamma_n t/\varrho_n)}\zeta_k^{(n)},
\]
where $\varrho_n= \gamma_n/{\hat{\lambda}_n}$.

The application of Corollary~\ref{corl:main} with $\delta_n:=0$ implies the following result.

\begin{enumerate}
\item If $\varrho = 0$, then the limit of $y_n$ is the Skorokhod reflection of $\mu t+\sigma w(t)$ at 0.
\item If $\varrho > 0$, then the limit of $y_n$ is  the sticky reflection of $\mu t+\sigma w(t)$ at 0.
\item If $\varrho=\infty,$ then the limit of $y_n$ is $\mu t+\sigma w(t)$  stopped at 0.
\end{enumerate}

In addition, the case when $ \xi_k^{(n)}\overset{d}=\zeta_k^{(n)}$ was considered in  \cite{HarrisonLemoine}. In this case \eqref{eq:352} is satisfied with $\gamma_n = \lambda_n/r_n$. Another example when  \eqref{eq:352} holds is the following one: $\zeta_k^{(n)}=\zeta_k/\gamma_n$ where $(\zeta_k)$ are non-negative independent identically distributed random variables, $\E  \zeta_k=1$, and $(\gamma_n)$ is any sequence of positive numbers such that $\gamma_n \to +\infty$.

 If the sequence $(\sum_{k=1}^{[\gamma_n t]}\zeta_k^{(n)})$ converges to an increasing subordinator $F_0(t), t\geq 0$, then the limit process will be a diffusion with jump-type exit from 0. The only non-triviality is to verify condition (e) of Theorem~\ref{thm:main}. This will be done in more general case in next example.

\subsection{Convergence to a reflected L\'evy  process with a delay at 0} Let a sequence of positive numbers $(\rho_n)$ and  sequences of stochastic 
processes $(x_n), (F_n)$ be such that
\begin{enumerate}
\item $ (x_n, F_n) \Rightarrow  (x_0, F_0)$ as $n\to\infty;$
\item $x_0$ is a L\'evy process without negative jumps;
\item $F_0$ is an increasing subordinator;
\item $x_0$ and $F_0$ are independent;
\item $\varrho_n \to \varrho \in [0,\infty].$ 
\end{enumerate}
The process $T_0 := m_0^{-1}$ is a subordinator (may be killed at an exponential time), see \cite[Theorem 1, p.189]{Bertoin}. 
A point $t$ is not  a point of growth of $m_0$ if and only 
 if  $m_0(t)$ is a point of jump of  $T_0$.
 At any fixed (non-random) $\alpha \geq 0$, the function $T_0$ is almost surely continuous. Since the processes $F_0$ and $T_0$ are independent and the set of jumps of $F_0$ is at most countable, condition (e) of Theorem~\ref{thm:main} is true almost surely. The application of Corollary \ref{corl:main} implies convergence in distribution \eqref{eq:307}. It is natural to say that the  process $S(x_0,F_0)(A_\varrho^{-1})$ is  the process $x_0$ that having jump-type reflection at 0 with a delay. It may be seen similarly to \cite[Chapter II.3 (c)]{Excursion}  that the  process  $\cS(x_0,F_0)$ is a Markov process 
and $L(t)=F_0^{-1}(m_0(t)), t\geq 0$ is its local time at 0
if  $x_0$ and $F_0 $ are independent.

\subsection{Perturbed random walks} 
Let $(\xi_k)$ be a sequence of independent identically distributed mean-zero random variables with finite variance $\sigma^2>0.$ Consider the random walk $S_\xi(n):=\sum_{k=1}^n\xi_k$, where $S_\xi(0):=0$. Let us extend  $S_\xi$ to non-negative half-line as follows  $S_\xi(t):=S_\xi([t])$ for $t\ge0.$ It is well known that 
\[
\frac{S_\xi(nt)}{\sqrt{n}}\Rightarrow \sigma w(t),\quad  n\to\infty
\]
in $\cD$ by the Donsker theorem, where $w$ is a standard Wiener process. Assume that a non-negative random variable $\eta$ belongs to the domain of attraction of the $\beta$-stable law with $\beta\in(0,1)$. Consider  a Markov chain  $(X(n))$ with transition probabilities
\[
\Pb(X(n+1)=y | X(n)=x)=\begin{cases}
\Pb(\xi=y-x),\ x\geq 0;\\
\Pb(\eta=y-x),\ x< 0.
\end{cases}
\]
We will interpret $X$ as a perturbation of $S_\xi$  below 0. Note that $X$ has the same distribution as a solution of two-phase system, where $\delta=0$, $x = S_\xi$, and $F = S_\eta$. Here $S_\eta(n)=\sum_{k=1}^n\eta_k,  $  $S_\eta(t):=S_\eta([t]) $ for $ t\geq 0,$ and the sequences  $(\eta_k)$ and $(\xi_k)$ are independent. That is,
\[
X\overset{d}= \cG_0(S_\xi, S_\eta).
\]
Note that natural scaling for $S_\eta$ is not $\sqrt{n}$ as in the Donsker theorem. There is a sequence $(a(n))$ that is slowly varying at infinity with index $\frac{1}{\beta}$ such that
\[
\frac{S_\eta( n\cdot)}{a(n)}\Rightarrow   U_\beta(\cdot),\quad  n\to\infty,
\]
in $\cD$, where $U_\beta$ is a $\beta$-stable subordinator: a non-decreasing L\'evy process with Laplace transform $\E[\exp(-\lambda  U_\beta(t))] = \exp(-t\lambda^{\beta})$ for $t, \lambda \ge 0$. It can be seen that there is a sequence $(b(n))$ that is slowly varying at infinity with index $\beta/2$ such that
\[
\frac{S_\eta( b(n) \cdot )}{\sqrt{n}} 
\Rightarrow U_\beta(\cdot),\quad n\to\infty.
\]
Define $\varrho_n := b(n)/n$. Notice that $\lim_{n\to\infty}\varrho_n = 0$ because $\beta/2<1$. Thus
\[
\frac{S_\eta(\varrho_n n\cdot)}{\sqrt{n}} =\frac{S_\eta( b(n) \cdot )}{\sqrt{n}} \Rightarrow U_\beta(\cdot),\quad n\to\infty.
\]
Define processes $x_n(t):= S_\xi(nt)/\sqrt{n}$ and $F_n(t):= S_\eta(\varrho_n nt)/\sqrt{n}$. Hence
\[
 \Big(\frac{X( nt )}{\sqrt{n}}\Big)_{t\geq0}  \overset{d}= \Big(\cG_0\left(x_n,F_n(\cdot/\varrho_n)\right)(t)\Big)_{t\geq0}.
\]

Corollary~\ref{corl:main} implies 
the weak convergence
\bel{eq:IPP_conv_refl_stable}
\frac{X([n\cdot ])}{\sqrt{n}}
\overset{d}= \cG_0\left(x_n, F_n(\cdot/\varrho_n)\right)  \Rightarrow  
\cS\left(  w,   U_\beta \right)   = w +  U_\beta\circ U_\beta^{-1}\circ m,   \ee
where $w$  and $U_\beta$ are independent, $m(t) = \max_{s\in [0,t]}(w(t)_-)$. Feasibility of condition (e) of Theorem~\ref{thm:main} follows from the reasoning of the previous example.

\begin{remk}
If we multiply each $b(n)$ by a constant $C > 0$, then 
$$
S_\eta\left(\frac{ C b(n) t  }{\sqrt{n}}\right) \Rightarrow \hat{U}_{\beta}(t) := U_\beta(Ct).
$$
However, the result  will be unchanged because 
$\hat{U}_\beta\circ\hat{U}_\beta^{-1} = U_\beta\circ U_\beta^{-1}$.
\end{remk}

Note that the same result is also true if transition probabilities for $X$ are
\[
\Pb(X(n+1)=y | X(n)=x)=\begin{cases}
\Pb(\xi=y-x),\ x> 0;\\
\Pb(\eta=y-x),\ x\leq 0.
\end{cases}
\]
Convergence \eqref{eq:IPP_conv_refl_stable} was proved in \cite{Iksanov} and for a particular case in \cite{PilipenkoPrykhodko}.   The same convergence was obtained in \cite{Iksanov} for the Donsker scaling limits of sequences
\[
   X'(n+1)=
\begin{cases}
  X'(n) + \xi_{n+1}, &   X'(n) > 0, \\
\eta_{n+1}, & \hat X'(n) \leq 0;
\end{cases}
\]
 \[
\grave X(n+1)=
\begin{cases}
\grave X(n) + \xi_{n+1}, & \grave X(n)>0 \mbox{ and } \grave X(n) + \xi_{n+1} > 0, \\
0, & \grave X(n)>0 \mbox{ and } \grave X(n) + \xi_{n+1} \leq 0,\\
\eta_{n+1}, & \grave S_v(n)=0,
\end{cases}
\]
that also satisfy assumptions of Corollary \ref{corl:main}  (the corresponding additional reasoning can be found in \cite{Iksanov}). 
Some ideas used in this paper are taken from \cite{Iksanov, PilipenkoPrykhodko}, but only now it becomes clear how different scaling for $(\xi_k)$ and $(\eta_k)$ interplay and give  a limit for the Donsker scaling of perturbed random walk $(X(n))$.
\begin{remk}
 Consider  sequences $(X_l(k))_{k\geq 0}$ that have the same transition probabilities as $(X(k))_{k\geq 0}$ but different initial values such that $X_l(0)/\sqrt{l} \Rightarrow \zeta, n\to\infty$. It is easy to see  that  the following convergence  can be proved:
 \[
X_l(t)/\sqrt{l} \Rightarrow \zeta+w(t)+U_\beta \circ U_\beta^{-1} \circ ((-\zeta+m(t))_+),\quad t\geq 0,
 \]
 where processes $w$, $U_\beta$, and the random variable $\zeta $ are independent.
\end{remk}
 
\section{Proof of the Main Result}

This section is organized as follows. In Subsection 5.1, we state an upper and a lower bound for the solution of the switch problem. These two lemmas have proofs postponed until the Appendix. In Subsection 5.2, we write four technical convergence lemmas used for the proof of Theorem~\ref{thm:main}. The first two of these four lemmas, similarly, have proofs in the Appendix. The other two lemmas are quoted from other sources so we do not give their proofs. In the next three subsections,  we prove Theorem~\ref{thm:main} for three cases: $0 < \varrho < \infty$, $\varrho = 0$, and $\varrho = \infty$.

\subsection{Estimates for the switch problem} Take a constant $\delta > 0$ and functions $x, F$, as in the definition of the switch problem. Let $y = \mathcal G_{\delta}(x, F)$ be the solution, and $A, B$ the corresponding sets. Define the running maximum
$$
m(t) = \sup\limits_{s \in [0, t]}(x(s)_-),\, t \ge 0.
$$
We state the two lemmas which together form the basis for the proof. 

\begin{lemma} Let $y=\cG_\delta(x,F)$. Then for  every $t \geq 0$, we have: $F(T_{B}(t)-) \leq m(T_{A}(t))$. 
\label{lemma:est-1}
\end{lemma}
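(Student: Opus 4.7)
My plan is to read the desired bound directly off the defining equation \eqref{eq:main_eq_def_system}, using the simple observation that throughout each boundary interval $[\rho_k, \tau_k)$ the output $y$ stays strictly negative. I would fix $t \geq 0$ and distinguish two cases according to whether $t$ lies in a boundary interval $[\rho_k, \tau_k)$ or in a normal interval $[\tau_{k-1}, \rho_k)$ (with the convention $\tau_0 = 0$).

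\textbf{Case A (boundary interval).} Here $T_A(s)\equiv T_A(\rho_k)$ on $[\rho_k,\tau_k)$ so \eqref{eq:main_eq_def_system} specializes to $y(s)=x(T_A(\rho_k))+F(T_B(s))$ for $s\in[\rho_k,\tau_k)$. Since $T_B$ is continuous and strictly increases on this interval, $T_B(s)\uparrow T_B(t)$ as $s\uparrow t$ with $T_B(s)<T_B(t)$, so
$$F(T_B(t)-)=y(t-)-x(T_A(\rho_k)).$$
Because $y(s)<0$ on $[\rho_k,\tau_k)$, passing to the limit gives $y(t-)\le 0$, whence $F(T_B(t)-)\le -x(T_A(\rho_k))\le x(T_A(\rho_k))_-\le m(T_A(\rho_k))=m(T_A(t))$.

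\textbf{Case B (normal interval).} Here $T_B(s)\equiv T_B(\tau_{k-1})$ on $[\tau_{k-1},\rho_k)$, so $F(T_B(t)-)=F(T_B(\tau_{k-1})-)$. If $k=1$ then $T_B(\tau_{k-1})=0$ and $F(T_B(t)-)=F(0-)=0\le m(T_A(t))$. If $k\ge 2$, then $T_B(\tau_{k-1})$ is the right endpoint of the range of $T_B$ on the preceding boundary interval $[\rho_{k-1},\tau_{k-1})$, and the same limiting computation as in Case A, applied as $s\uparrow\tau_{k-1}$ along that interval, yields $F(T_B(\tau_{k-1})-)\le -x(T_A(\tau_{k-1}))\le m(T_A(\tau_{k-1}))$. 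Monotonicity of $m$ and of $T_A$ then gives $m(T_A(\tau_{k-1}))\le m(T_A(t))$.

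The two cases cover all $t$, and the boundary value $t=\rho_k$ (where the two cases meet) is consistent because $T_B(\rho_k)=T_B(\tau_{k-1})$ reduces it to Case B. The main obstacle, as I see it, is not really an obstacle but careful bookkeeping: one must correctly identify along which interval the left limit $F(T_B(t)-)$ is being approached, and confirm that the inequality $y(s)<0$ survives passage to the left limit. Both points are handled cleanly once one exploits the continuity and piecewise monotonicity of $T_A$ and $T_B$.
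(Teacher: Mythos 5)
Your proof is correct and follows essentially the same route as the paper's: split $t$ according to whether it lies in a boundary or a normal interval, use that $T_B$ is strictly increasing on boundary intervals to identify $F(T_B(t)-)$ with the left limit of $F\circ T_B$, invoke $y<0$ on $[\rho_k,\tau_k)$ to get $y(t-)\le 0$, and finish with monotonicity of $m\circ T_A$. The only differences are cosmetic (your intervals are closed on the left rather than on the right, and you handle the $k=1$ convention $F(0-)=0$ explicitly), so there is nothing to add.
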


\begin{lemma} Let $y=\cG_\delta(x,F)$. Then for  every  $t \geq 0$, we have:
\begin{align*}
F(T_{B}(t)) \geq  -\delta + m(T_{A}(t)) & -  \sup_{s \in[0,T_{A}(t)]}(x(s-)-x(s)) \\ & - \sup_{0\leq s_1\leq s_2\leq T_{B}(t)}(F(s_1)-F(s_2)).
\end{align*}
\label{lemma:est-2}
\end{lemma}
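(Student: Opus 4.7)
The plan is to produce an upper bound on $m(T_A(t))$ of the form $\delta + F(T_B(t)) + (\text{error terms})$ and then rearrange to get the claim. To do this cleanly, I would first set up notation for the checkpoints of $T_A$ and $T_B$ at the regime switches. Since $T_A$ is constant on each $B$-interval $[\rho_k,\tau_k)$ and $T_B$ is constant on each $A$-interval $[\tau_{k-1},\rho_k)$, it is convenient to set
\[
s_k := T_A(\rho_k)=T_A(\tau_k),\qquad u_k := T_B(\tau_k)=T_B(\rho_{k+1}),
\]
with $s_0=u_0=0$. Then \eqref{eq:main_eq_def_system} reads $y(r)=x(T_A(r))+F(u_{k-1})$ on the $A$-phase of cycle $k$ and $y(r)=x(s_k)+F(T_B(r))$ on the $B$-phase. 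Let $k_0$ denote the cycle containing $t$; in either regime one has $u_{k-1}\le T_B(t)$ for every $k\le k_0$.

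The heart of the argument is a per-cycle estimate on $\max_{s\in[s_{k-1},s_k]}(-x(s))$. For $k<k_0$, the fact that $y(r)>-\delta$ throughout $[\tau_{k-1},\rho_k)$, together with the identity above, forces $x(s)>-\delta-F(u_{k-1})$ for every $s\in[s_{k-1},s_k)$; the left limit $x(s_k-)$ inherits the same inequality. The value $x(s_k)$ itself can drop below this threshold, but only through a negative jump of $x$ at $s_k$, so
\[
\max_{s\in[s_{k-1},s_k]}(-x(s)) \leq \delta+F(u_{k-1})+\bigl(x(s_k-)-x(s_k)\bigr)_+ \leq \delta+F(u_{k-1})+\sup_{s\in[0,T_A(t)]}\bigl(x(s-)-x(s)\bigr).
\]
Exactly the same argument applied to the (possibly partial) final cycle $k_0$, with $s_{k_0}$ replaced by $T_A(t)$ when $t$ lies in regime $A$, shows that this bound also controls the max over the last segment.

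Taking the maximum over all $k\le k_0$ gives
\[
m(T_A(t)) \leq \delta + \max_{k\le k_0}F(u_{k-1}) + \sup_{s\in[0,T_A(t)]}\bigl(x(s-)-x(s)\bigr).
\]
Finally, since $u_{k-1}\le T_B(t)$ for every $k\le k_0$, the choice $s_1=u_{k-1}$, $s_2=T_B(t)$ in the $F$-decrease supremum gives $F(u_{k-1})\le F(T_B(t))+\sup_{0\le s_1\le s_2\le T_B(t)}(F(s_1)-F(s_2))$. Substituting this into the previous display and rearranging yields the lemma.

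The two error terms in the statement are precisely the price of handling two independent pathologies, and this is where I expect the main care to be needed. The $\sup(x(s-)-x(s))$ term is needed because $x$ may enter the critical regime by a large negative jump at $\rho_k$, so $-x(s_k)$ can overshoot $\delta+F(u_{k-1})$ by exactly that amount; without the left-limit trick one loses control at the switching instants. The $\sup(F(s_1)-F(s_2))$ term is needed because $F$ is not assumed monotone, so $F(u_{k-1})$ for some earlier $k$ may exceed $F(T_B(t))$ even though $u_{k-1}\le T_B(t)$. Beyond an honest case distinction between the two regimes for the terminal cycle $k_0$, the remaining manipulations are monotone sup comparisons of the type already written.
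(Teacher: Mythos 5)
Your proposal is correct, and it reaches the estimate by a genuinely different route than the paper. The paper argues by induction on the switching cycles, treating separately the case where $t$ lies in a $B$-interval and the case where it lies in an $A$-interval; the key device there is the observation that the switch into regime $B$ occurs exactly at the running minimum of $x$, i.e.\ $x(T_A(\rho_k))=-m(T_A(\rho_k))$, which turns the regime-$B$ case into a telescoping decomposition of $F(T_B(t))$ whose increments are controlled by $y(\rho_k-)\ge-\delta$, the negative-jump supremum of $x$, and the decrease supremum of $F$. You instead bound $m(T_A(t))$ directly, without induction, by partitioning $[0,T_A(t)]$ into the images of the $A$-phases, bounding $-x$ on each piece by $\delta+F(u_{k-1})$ plus the jump correction at the switch point, and then comparing each $F(u_{k-1})$ to $F(T_B(t))$ through the decrease supremum. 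Both proofs consume exactly the same three inputs (the $-\delta$ barrier, the possible negative jump of $x$ at the switching instant, and the non-monotonicity of $F$), but your version makes the provenance of each error term more transparent and treats the ``current time in regime $A$'' case uniformly with the rest, whereas the paper's Case~2 is a one-line bound that only sees the current $A$-phase and implicitly leans on the earlier cycles --- your maximum over $k\le k_0$ makes that dependence explicit. One small point worth writing out in a polished version: the $A$-phases are non-degenerate (since $y(\tau_{k-1})\ge 0>-\delta$ and $y(\rho_k)<0$), so $s_{k-1}<s_k$, the left limit $x(s_k-)$ is approached from within $[s_{k-1},s_k)$, and it therefore inherits the bound $x(s_k-)\ge-\delta-F(u_{k-1})$ as you claim.
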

 
\subsection{Preliminary results} 
The next two lemmas are technical convergence results, with (simple) proofs postponed until the Appendix.

\begin{lemma} If $x_n\to x_0$ in $\mathcal D$, where $x_0$ has only positive jumps, then for any $T > 0$, $\sup_{s \in[0,T]}(x_n(s-) - x_n(s)) \to 0$. 
\label{lemma:jumps}
\end{lemma}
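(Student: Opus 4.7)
The plan is to reduce the claim to uniform convergence on a compact interval via the time-change characterization of $J_1$-convergence. Fix $T' > T$ with $T'$ a continuity point of $x_0$. By $x_n \to x_0$ in $\mathcal{D}$, there exist $\lambda_n \in \Lambda_{T'}$ with $\sup_{t \in [0, T']}|\lambda_n(t) - t| \to 0$ and $\tilde x_n := x_n \circ \lambda_n \to x_0$ uniformly on $[0, T']$. Since each $\lambda_n$ is a continuous bijection of $[0, T']$ and $\tilde x_n(u-) = x_n(\lambda_n(u)-)$ by continuity and strict monotonicity of $\lambda_n$, a change of variable gives
$$
\sup_{s \in [0, T]}\bigl(x_n(s-)-x_n(s)\bigr) \;\le\; \sup_{s \in [0, T']}\bigl(x_n(s-)-x_n(s)\bigr) \;=\; \sup_{u \in [0, T']}\bigl(\tilde x_n(u-) - \tilde x_n(u)\bigr),
$$
so it suffices to prove the right-hand side tends to $0$.

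Suppose for contradiction there are $\varepsilon > 0$, a subsequence (unrelabeled), and $t_n \in [0, T']$ with $\tilde x_n(t_n-) - \tilde x_n(t_n) \ge \varepsilon$; extract a further subsequence with $t_n \to t^\ast \in [0, T']$. I split into three cases according to how $t_n$ approaches $t^\ast$. If $t_n = t^\ast$ infinitely often, then uniform convergence of RCLL functions propagates to both one-sided limits, i.e.\ $\tilde x_n(t^\ast) \to x_0(t^\ast)$ and $\tilde x_n(t^\ast-) \to x_0(t^\ast-)$, so $\Delta \tilde x_n(t^\ast) \to \Delta x_0(t^\ast) \ge 0$, contradicting $\Delta\tilde x_n(t^\ast) \le -\varepsilon$. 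If $t_n \downarrow t^\ast$ strictly, pick $\eta \in (0, \varepsilon/4)$ and then, by right-continuity of $x_0$ at $t^\ast$, choose $\delta > 0$ with $|x_0(s) - x_0(t^\ast)| < \eta$ for $s \in [t^\ast, t^\ast + \delta)$; for $n$ so large that $t_n \in (t^\ast, t^\ast + \delta)$ and $\|\tilde x_n - x_0\|_{\infty;[0,T']} < \eta$, every $s \in [t^\ast, t_n]$ satisfies $|\tilde x_n(s) - x_0(t^\ast)| < 2\eta$; letting $s \uparrow t_n$ yields the same bound for $\tilde x_n(t_n-)$, and setting $s = t_n$ gives it for $\tilde x_n(t_n)$. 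Hence $|\tilde x_n(t_n-) - \tilde x_n(t_n)| < 4\eta < \varepsilon$, a contradiction. The case $t_n \uparrow t^\ast$ strictly is handled identically with $x_0(t^\ast-)$ in place of $x_0(t^\ast)$, using $\lim_{s \uparrow t^\ast} x_0(s) = x_0(t^\ast-)$.

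The only mild obstacle is the presence of the moving left-limits $\tilde x_n(t_n-)$: one cannot directly interchange the limits $n \to \infty$ and $s \uparrow t_n$, so one must first trap $t_n$ inside a one-sided neighborhood of $t^\ast$ on which $x_0$ has small oscillation and only then invoke the uniform bound $\|\tilde x_n - x_0\|_\infty < \eta$. The hypothesis that $x_0$ has only positive jumps enters exactly at Case 1, where it forces $\Delta x_0(t^\ast) \ge 0$ and rules out a negative jump at the accumulation point $t^\ast$.
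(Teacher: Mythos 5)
Your proof is correct, but it takes a different route from the paper's. The paper defines the ``largest negative jump'' functional $j(x) := \sup_{s\in[0,T]}\bigl(x(s-)-x(s)\bigr)$ on $\mathcal D_T$, asserts its continuity in the $J_1$-topology by analogy with Billingsley's Example~12.1 (continuity of the largest-jump functional), and concludes from $j(x_0)=0$. You instead unfold the definition of $J_1$-convergence directly: pass to a continuity point $T'>T$ of $x_0$, use time changes $\lambda_n$ to get uniform convergence of $\tilde x_n = x_n\circ\lambda_n$, observe that the negative-jump supremum is invariant under the reparametrization, and then run a compactness/contradiction argument on a hypothetical sequence of large negative jumps $t_n\to t^\ast$, with the three cases $t_n=t^\ast$, $t_n\downarrow t^\ast$, $t_n\uparrow t^\ast$ handled by the right- and left-continuity of $x_0$ together with the uniform bound. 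This is essentially a self-contained proof of the one-sided continuity estimate that the paper delegates to the citation; what your version buys is that it fills in the ``can be proved similarly'' step explicitly (in particular the correct handling of the moving left limits $\tilde x_n(t_n-)$, which is exactly where a naive interchange of limits would fail), at the cost of being longer. A marginally shorter variant of your own argument: once you have $\|\tilde x_n - x_0\|_{\infty;[0,T']}\to 0$, the pointwise inequality $\tilde x_n(u-)-\tilde x_n(u)\le \bigl(x_0(u-)-x_0(u)\bigr)+2\|\tilde x_n-x_0\|_\infty \le 2\|\tilde x_n-x_0\|_\infty$ gives the conclusion in one line, since $x_0(u-)-x_0(u)\le 0$ by hypothesis; this is the quantitative form of the continuity the paper invokes. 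Both approaches are valid.
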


\begin{lemma} If $F_n\to F_0$ in $\mathcal D$, where $F_0$ is a non-decreasing function, then for any $T > 0$
$\sup_{0\leq s_1\leq s_2\leq T }(F_n(s_1) - F_n(s_2)) \to 0$. 
\label{lemma:increase}
\end{lemma}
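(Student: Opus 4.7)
\textbf{Plan for Lemma~\ref{lemma:increase}.}

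The plan is to use the defining characterization of $J_1$-convergence: there exist time-changes $\lambda_n \in \Lambda_{T'}$ (for some $T' \geq T$) such that $\|\lambda_n - \mathrm{id}\|_\infty \to 0$ and $\|F_n\circ\lambda_n - F_0\|_\infty \to 0$ on $[0, T']$. Since $F_0$ has at most countably many jumps, we may pick $T' \geq T$ that is a continuity point of $F_0$; then convergence in $\mathcal D$ yields the uniform time-change statement above on $[0, T']$, and bounding the sup on $[0, T]$ by the sup on $[0, T']$ costs nothing.

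Next, given any $0 \leq s_1 \leq s_2 \leq T'$, use that $\lambda_n$ is a continuous strictly increasing bijection of $[0, T']$ to write uniquely $s_i = \lambda_n(t_i)$ with $0 \leq t_1 \leq t_2 \leq T'$. Then decompose
\[
F_n(s_1) - F_n(s_2) = \bigl[F_n(\lambda_n(t_1)) - F_0(t_1)\bigr] + \bigl[F_0(t_1) - F_0(t_2)\bigr] + \bigl[F_0(t_2) - F_n(\lambda_n(t_2))\bigr].
\]
The middle bracket is $\leq 0$ because $F_0$ is non-decreasing and $t_1 \leq t_2$. The two outer brackets are each bounded in absolute value by $\varepsilon_n := \|F_n\circ\lambda_n - F_0\|_\infty$. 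Taking the supremum over $s_1 \leq s_2$ therefore yields
\[
\sup_{0 \leq s_1 \leq s_2 \leq T}(F_n(s_1) - F_n(s_2)) \leq 2\varepsilon_n \to 0,
\]
which is the claim.

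The argument is essentially a one-liner once the right parametrization is chosen, so I do not anticipate any real obstacle. The only mildly delicate point is that $T$ itself might be a jump point of $F_0$, in which case the $J_1$-convergence statement on the exact interval $[0,T]$ is not automatic; this is handled by the cheap trick of enlarging to a nearby continuity point $T' \geq T$, as noted above. All other steps reduce to the triangle inequality together with the monotonicity of $F_0$.
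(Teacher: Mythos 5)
Your proposal is correct and follows essentially the same route as the paper: reparametrize the supremum through the $J_1$ time-changes $\lambda_n$, then use uniform convergence of $F_n\circ\lambda_n$ to $F_0$ together with the monotonicity of $F_0$. The only difference is cosmetic — you make the triangle-inequality decomposition explicit and carefully handle the case where $T$ is a jump point of $F_0$ by enlarging to a continuity point $T'$, a detail the paper's proof glosses over by simply assuming continuity of $F_0$ at $T$.
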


Recall  a classical characterization of convergence in a Skorokhod space from  \cite[Chapter 3, Proposition 6.5, page 125]{Ethier-Kurtz}:

\begin{lemma}
\label{prop:criterion}  
We have $x_n \to x_0$ in $\mathcal D$ if and only if for any non-negative $t_n \to t_0$, we have:
\begin{enumerate} 
\item \label{prop:criterion:1} all limit points of $(x_n(t_n))_{n\geq1}$ are either $x_0(t_0)$ or $x_0(t_0-)$;
\item \label{prop:criterion:2} if $x_n(t_n) \to x_0(t_0)$, $s_n \geq t_n$, $s_n \to t_0$, then
$x_n(s_n) \to x_0(t_0)$;
\item \label{prop:criterion:3} if $x_n(t_n) \to x_0(t_0-)$, $s_n \leq t_n$, $s_n \to t_0$, then $x_n(s_n) \to x_0(t_0-)$.
\end{enumerate}
\end{lemma}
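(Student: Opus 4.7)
The plan is to deduce both directions from the standard time-change characterization of the Skorokhod $J_1$ topology: on each compact interval $[0,T]$ with $T$ a continuity point of $x_0$, convergence $x_n \to x_0$ in $\cD$ is equivalent to the existence of strictly increasing continuous bijections $\lambda_n : [0,T] \to [0,T]$ with $\|\lambda_n - \mathrm{id}\|_{[0,T]} \to 0$ and $\|x_n \circ \lambda_n - x_0\|_{[0,T]} \to 0$. Both implications of the lemma will be squeezed out of this equivalent formulation by inverting the time change.

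For the direction ($\Rightarrow$), fix a sequence $t_n \to t_0$ inside some such $[0,T]$ and set $r_n := \lambda_n^{-1}(t_n)$. Then $r_n \to t_0$ and $x_n(t_n) = (x_n \circ \lambda_n)(r_n) = x_0(r_n) + o(1)$. Since $x_0$ is RCLL, every subsequential limit of $x_0(r_n)$ is either $x_0(t_0)$ (when $r_n \ge t_0$ cofinally) or $x_0(t_0-)$ (when $r_n < t_0$ cofinally), which proves (1). For (2): if $x_n(t_n) \to x_0(t_0)$ and $x_0(t_0) \ne x_0(t_0-)$, then $r_n \ge t_0$ for all large $n$; if $s_n \ge t_n$ and $s_n \to t_0$, monotonicity of $\lambda_n^{-1}$ yields $\lambda_n^{-1}(s_n) \ge r_n \ge t_0$ with $\lambda_n^{-1}(s_n) \to t_0$, and right-continuity of $x_0$ gives $x_n(s_n) \to x_0(t_0)$. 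Condition (3) is symmetric, using left limits of $x_0$ from below.

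For the direction ($\Leftarrow$), assume (1)--(3). Fix $T$ at which $x_0$ is continuous and $\varepsilon > 0$. List the finitely many points $\tau_1 < \cdots < \tau_k$ in $[0,T]$ with $|\Delta x_0(\tau_i)| > \varepsilon$. Conditions (1)--(3) applied on either side of each $\tau_i$ force the existence of times $\tau_i^{(n)} \to \tau_i$ at which $x_n$ makes a jump approximating $\Delta x_0(\tau_i)$. Define $\lambda_n : [0,T] \to [0,T]$ by piecewise linear interpolation through $0 \mapsto 0$, $\tau_i \mapsto \tau_i^{(n)}$, $T \mapsto T$; then $\|\lambda_n - \mathrm{id}\|_{[0,T]} \to 0$ automatically. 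On each inter-jump subinterval $[\tau_{i-1}, \tau_i)$, uniform convergence $\|x_n \circ \lambda_n - x_0\|_\infty \to 0$ up to error $O(\varepsilon)$ is obtained by contradiction: a violation would produce $u_n \to u_0$ with $x_n(\lambda_n(u_n))$ and $x_0(u_n)$ separated, violating (1) for $\lambda_n(u_n) \to u_0$ or violating the directional matching forced by (2)--(3). A diagonalization over $\varepsilon \downarrow 0$ completes the argument.

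The main obstacle is the reverse direction, specifically the selection of the approximate jump times $\tau_i^{(n)}$ and the control of the uniform error on each inter-jump interval. Condition (1) by itself allows for pathological alignment in which $\lambda_n$ is offset to the wrong side of a jump; conditions (2) and (3) are precisely what rules this out, ensuring that for a sequence approaching $t_0$ from a fixed side, the $\lambda_n$ images do the same. Careful bookkeeping with the threshold $\varepsilon$--exploiting that $x_0$ has only finitely many $\varepsilon$-jumps on $[0,T]$--is where the bulk of the technical work lies.
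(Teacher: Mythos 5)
This lemma is not proved in the paper at all: it is quoted verbatim from Ethier--Kurtz (Chapter 3, Proposition 6.5), so there is no in-paper argument to compare against. Measured against the standard textbook proof, your outline follows essentially the same route (time-change characterization of $J_1$, inversion of $\lambda_n$ for the forward direction, jump-matching construction of $\lambda_n$ for the converse). The forward direction as you present it is correct and essentially complete: the reduction $x_n(t_n)=x_0(\lambda_n^{-1}(t_n))+o(1)$, the dichotomy $r_n\ge t_0$ versus $r_n<t_0$, and the observation that (2) forces $r_n\ge t_0$ eventually when $x_0(t_0)\ne x_0(t_0-)$ are all sound (the continuous case, which you skip, is trivial).

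The converse is where your write-up is still a sketch rather than a proof, and two steps deserve to be flagged. First, the existence of switching times $\tau_i^{(n)}\to\tau_i$ at which $x_n$ ``makes a jump approximating $\Delta x_0(\tau_i)$'' is not automatic from the statement as you phrase it: a priori $x_n$ could pass continuously from near $x_0(\tau_i-)$ to near $x_0(\tau_i)$ across a shrinking window. Ruling this out requires applying condition (1) to a sequence of intermediate-value times (any $u_n\to\tau_i$ with $x_n(u_n)$ pinned at the midpoint would produce a forbidden limit point), and then conditions (2)--(3) to show the switch happens exactly once and with the correct one-sided behavior; none of this is written down. Second, the uniform bound on each inter-jump interval by contradiction needs care precisely when the accumulation point $u_0$ of the offending sequence is one of the endpoints $\tau_i$: there the two admissible limit points $x_0(u_0)$ and $x_0(u_0-)$ differ by more than $\varepsilon$, and you must invoke the alignment $\lambda_n(\tau_i)=\tau_i^{(n)}$ together with the directional statements (2)--(3) to match sides. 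You correctly identify both of these as the crux, but as written they are assertions, not arguments; the proposal is a faithful plan for the classical proof rather than a complete one.
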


The following lemma easily follows from the previous one.

\begin{lemma} Assume $x_n \to x_0$ in $\mathcal D$ and $y_n \to y_0$ in $\mathcal D$. If for every point $t \ge 0$ at least one of two functions $x_0$ and $y_0$ is continuous at this point, then  $x_n + y_n \to x_0 + y_0$ in $\mathcal D$.
\label{lemma:sum}
\end{lemma}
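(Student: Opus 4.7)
The plan is to verify the three conditions of Lemma \ref{prop:criterion} for the sequence $x_n + y_n$ and the limit $x_0 + y_0$. Fix any sequence $t_n \to t_0$ with $t_n \ge 0$. By hypothesis at least one of $x_0$, $y_0$ is continuous at $t_0$; without loss of generality assume $x_0$ is continuous at $t_0$, so $x_0(t_0-) = x_0(t_0)$. Applied to $x_n$, condition \ref{prop:criterion:1} of Lemma \ref{prop:criterion} then forces every subsequential limit of $x_n(t_n)$ to equal $x_0(t_0)$, which upgrades to the full convergence $x_n(t_n) \to x_0(t_0)$. The same reasoning applied to any $s_n \to t_0$ yields $x_n(s_n) \to x_0(t_0)$ unconditionally.

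For condition \ref{prop:criterion:1} for $x_n + y_n$: the limit points of $y_n(t_n)$ are contained in $\{y_0(t_0), y_0(t_0-)\}$, so the limit points of $(x_n+y_n)(t_n)$ lie in $\{x_0(t_0)+y_0(t_0), x_0(t_0)+y_0(t_0-)\} = \{(x_0+y_0)(t_0), (x_0+y_0)(t_0-)\}$, using continuity of $x_0$ at $t_0$ to identify the second value with $(x_0+y_0)(t_0-)$.

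For condition \ref{prop:criterion:2}: assume $(x_n+y_n)(t_n) \to (x_0+y_0)(t_0)$ and $s_n \ge t_n$, $s_n \to t_0$. Since $x_n(t_n) \to x_0(t_0)$ from the previous paragraph, subtract to get $y_n(t_n) \to y_0(t_0)$. Now apply condition \ref{prop:criterion:2} to $y_n$ to conclude $y_n(s_n) \to y_0(t_0)$, and combine with $x_n(s_n) \to x_0(t_0)$ to get $(x_n+y_n)(s_n) \to (x_0+y_0)(t_0)$. Condition \ref{prop:criterion:3} is symmetric: from $(x_n+y_n)(t_n) \to (x_0+y_0)(t_0-) = x_0(t_0)+y_0(t_0-)$ deduce $y_n(t_n) \to y_0(t_0-)$, then invoke condition \ref{prop:criterion:3} for $y_n$ along $s_n \le t_n$, $s_n \to t_0$ to get $y_n(s_n) \to y_0(t_0-)$, whence $(x_n+y_n)(s_n) \to (x_0+y_0)(t_0-)$.

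The only subtlety is the initial asymmetry: one must use continuity of $x_0$ at $t_0$ to promote the dichotomy in condition \ref{prop:criterion:1} to a genuine limit $x_n(t_n) \to x_0(t_0)$, which is exactly what makes the subtraction $y_n(t_n) = (x_n+y_n)(t_n) - x_n(t_n)$ extract the correct one-sided value of $y_0$ at $t_0$. If neither $x_0$ nor $y_0$ were assumed continuous at $t_0$, the separate values $x_0(t_0)$ and $x_0(t_0-)$ (respectively $y_0(t_0)$ and $y_0(t_0-)$) could combine in four ways, and the Skorokhod characterization would fail — this is the well-known obstruction to continuity of addition in the $J_1$ topology, and the hypothesis is tailored to rule it out.
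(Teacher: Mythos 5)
Your proof is correct and takes essentially the same route the paper intends: the paper offers no written argument, saying only that the lemma ``easily follows'' from Lemma~\ref{prop:criterion}, and your explicit verification of its three conditions (using continuity of one summand at $t_0$ to upgrade the two-valued limit-point dichotomy to genuine convergence of $x_n(t_n)$, then subtracting to control $y_n(t_n)$) is exactly that argument spelled out. No gaps.
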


\subsection{Proof of   Theorem \ref{thm:main} for $\varrho\in(0,\infty)$.} {\it Step 1.} For each $n = 1, 2, \ldots$, denote 
\begin{equation}
\label{eq:m-n-sup}
m_n(t) = \sup\limits_{s \in [0, t]}(x_n(s)_-),\, t \ge 0.
\end{equation}
Define $A_n$ and $B_n$ to be the sets $A$ and $B$ for the $n$th switch problem. 

First, the sequences $(T_{A_n})$, $(T_{B_n})$ are pre-compact in $\mathcal C_T$ for every $T > 0$. Indeed, $(T_{A_n})$, $(T_{B_n})$ are globally Lipschitz continuous with Lipschitz constant 1; therefore, this sequence $(T_{A_n})$ is equicontinuous in $\mathcal C_T$ for every $T > 0$. By the Arzela-Ascoli theorem, these sequences are pre-compact. The same applies to $(T_{B_n})$. Take a subsequence $(n')$ such that 
\begin{equation}
\label{eq:uniform-conv}
T_{A_{n'}} \to S_0,\quad T_{B_{n'}} \to S_1,\quad \text{ in any } \quad \mathcal C_T,
\end{equation} 
where $S_0(t) + S_1(t) = t$. Since $x_n \to x_0$ in $\mathcal D_T$ we have convergence $m_n\to m_0$ in $\mathcal D_T$. Recall that $m_0$ is continuous, see Remark~\ref{rmk:m-0}. Thus we have the uniform convergence $m_n\to m_0$ on any $[0, T]$. Combining these facts of the uniform convergence and the observation
\begin{equation}
\label{eq:values-0-t}
0 \le T_{A_{n'}}(t) \le T,\quad 0 \le T_{B_{n'}}(t) \le T\quad \mbox{for}\quad t \in [0, T].
\end{equation}
we get the uniform convergence on any $[0,T]$:
\begin{equation}
    \label{eq:composition}
m_{n'}(T_{A_n'} ) \to m_0(S_0 ), 
\end{equation}

{\it Step 2.} From Lemma \ref{prop:criterion}, for any $t>0$ all limit points of $F_{n'}(\varrho_n^{-1}T_{B_{n'}}(t))$ are either $F_0(\varrho^{-1}S_1(t)-)$ or $F_0(\varrho^{-1}S_1(t))$. Combining this argument with Lemmas~\ref{lemma:est-2},~\ref{lemma:jumps},~\ref{lemma:increase}, and the assumption that $F_0$ is non-decreasing, we get: 
$$
F_0(\varrho^{-1}S_1(t)-) \leq m_0(S_0(t))\quad \mbox{ and }\ 
F_0(\varrho^{-1}S_1(t)) \geq m_0(S_0(t)),\quad t \geq 0.
$$
Here we used the fact that $T_{A_n}(t) \leq t$ and $T_{B_n}(t) \leq t$, thus 
\begin{align*}
 0\leq    \sup_{s \in[0,T_{A_n}(t)]}(x_n(s-)-x_n(s)) & \le \sup_{s \in[0,t]}(x_n(s-)-x_n(s)) \to0,\quad n\to\infty,
\\
0\leq   \sup_{0\leq s_1\leq s_2\leq T_{B_n}(t)}(F_n(s_1)-F_n(s_2)) & \le \sup_{0\leq s_1\leq s_2\leq t}(F_n(s_1)-F_n(s_2))  \to0,\quad n\to\infty.
\end{align*}
The inequality 
\begin{equation}
\label{eq:find_delay}
 F_0(\varrho^{-1} S_1(t)-)\leq m_0(S_0(t)) \leq  F_0(\varrho^{-1}S_1(t)),\quad t \geq 0.
\end{equation}
implies $\varrho^{-1} S_1(t)=F_0^{-1}( m_0(S_0(t)))$. Since $S_0(t)+S_1(t)=t$, we can rewrite this as 
\[\varrho^{-1}(t-S_0(t))=F_0^{-1}(m_0(S_0(t))).\]
Therefore, 
\begin{equation}
\label{eq:S-0}
S_0(t)+\varrho F_0^{-1}(m_0(S_0(t)))=t,\quad t \ge 0.
\end{equation}
By Remark~\ref{rmk:m-0}, the function $A(s):=A_\varrho(s):= s+\varrho F_0^{-1}(m_0(s))$ is continuous, strictly increasing, $A(0)=0$, and $A(\infty)=\infty$. 
Therefore,   
$S_0(t)=A^{-1}(t)$ is continuous and strictly increasing. Combining the above formulas, we get:
\[
 S_1(t)=\varrho F_0^{-1}( m_0(S_0(t)))= \varrho F_0^{-1}( m_0(A^{-1}(t))).
\]

{\it Step 3.} It follows from  Steps 1 and 2 that there exists a sub-sequence $(n')$ such that we have $T_{A_{n'}} \to A^{-1}$ in $\mathcal C$. Similarly, we can show that every sub-sequence $(\tilde{n})$ have their own sub-sub-sequence $(\tilde{n}')$ such that $T_{A_{\tilde{n}'}} \to A^{-1}$ in $\mathcal C$. Since the space $\mathcal C$ is metric, this implies that $T_{A_n} \to A^{-1}$ in $\mathcal C$.
Since $ T_{B_{n}}(t)=t-T_{A_{n}}(t) $ we have convergence $ T_{B_{n}}\to S_1=\varrho F_0^{-1}( m_0(A^{-1}))$ too. It follows from \cite[Theorem 13.2.2]{Whitt} that 
\begin{equation}
\label{eq:conv_x0TA}
x_n\circ T_{A_n}\to x_0\circ A^{-1}\ \mbox{in}\   \cD 
\end{equation}
because $A^{-1}$ is continuous and strictly increasing. 

{\it Step 4.}  For brevity, we give the proof only for $\rho_n=1$. In the general case nothing will change, but the presence of $\rho_n$ in the numerators or denominators obscures the idea of the proof. Let us verify  the   convergence:
\begin{equation}
\label{eq:345}
F_n(T_{B_{n}})\to F_0(S_1) \ \ \mbox{in }\ \ \cD.
\end{equation}
Apply Lemma \ref{prop:criterion}. Assume that $t_n\to t_0, n\to\infty.$ Then $\wt t_n:=T_{B_{n}}(t_n)\to S_1(t_0)=: \wt t_0 $ because we have convergence $ T_{B_{n}}\to S_1$ in $\cC.$ Hence, by Lemma~\ref{prop:criterion}, the limit points of $(F_n(T_{B_{n}}(t_n)))=(F_n(\wt t_n))$ may be only $F_0(\wt t_0)= F_0(S_1(t_0))$ or $F_0(\wt t_0-)= F_0(S_1(t_0)-)$. 
Note that the function $S_1$ is only non-decreasing, so it is possible that $F_0(S_1(t_0)-)\neq F_0\circ S_1(t_0-).$ Conditions of Lemma~\ref{prop:criterion} are satisfied for $t_0$ if, for example, $\wt t_0=S_1(t_0)$ is a point of continuity of $F_0,$ or $\lim_{t\to t_0} F_n(T_{B_{n}}(t)) = F_0(\wt t_0)= F_0(S_1(t_0))$. Hence, the non-trivial case is only the one when $\wt t_0=S_1(t_0)=F_0^{-1}(m_0(A^{-1}(t_0)))$ is a point of jump of $F_0.$ Let $\alpha:=F_0(\wt t_0-)$ and $\beta:=F_0(\wt t_0).$ Then $\alpha<\beta.$ Notice that 
$F_0^{-1}(z)<\wt t_0$   for $z<\alpha,$  $F_0^{-1}(z)>\wt t_0$   for $z>\beta,$  and moreover $\alpha\leq m_0(A^{-1}(t_0))\leq \beta$. Assume at first that $m_0(A^{-1}(t_0))=\alpha.$ It follows from Theorem assumptions that $A^{-1}(t_0)$ is a point of growth of $m_0$. Thus $m_0(A^{-1}(s))<\alpha$ for $s<t_0$, because $A$ is strictly increasing. Hence   
\[
 F_0(S_1(t_0)-) = F_0(F_0^{-1}(m_0(A^{-1}(t_0)))-) =  F_0(F_0^{-1}(m_0(A^{-1}(t_0-))) )=F_0\circ S_1(t_0-). 
\]
Conditions \ref{prop:criterion:2} and \ref{prop:criterion:3} of Lemma \ref{prop:criterion} hold for the point $t_0$ because $F_0$ is strictly increasing. It remains to consider the case when $m_0(A^{-1}(t_0))\in(\alpha, \beta].$   Lemmas~\ref{lemma:est-2},~\ref{lemma:jumps},~\ref{lemma:increase}  and the assumption that $F_0$ is non-decreasing imply
\[
 \varliminf_{t\to t_0} F_n(T_{B_{n}}(t))\geq \varliminf_{t\to t_0} m_n(T_{A_{n}}(t)) = m_0(A^{-1}(t_0))>\alpha.
\]
Above, the limit points of $( F_n(T_{B_{n}}(t)))$ may be only $\alpha$ or  $\beta.$ So $\lim_{t\to t_0} F_n(T_{B_{n}}(t))$
exists and is equal to $\beta.$ Thus all conditions of 
 Lemma \ref{prop:criterion} holds and this completes the proof of \eqref{eq:345}. 

{\it Step 5.} From Definition~\ref{defn:switch}, $y_n(t)=x_n(T_{A_n}(t))+ F_n(T_{B_n}(t))$. 
Recall from~\eqref{eq:conv_x0TA} and~\eqref{eq:345}:
$$
x_n(T_{A_n})\to x_0(A^{-1})\quad \mbox{ and } \quad F_n(T_{B_n} )\to F_0(F_0^{-1}(m_0(A^{-1})))\quad \mbox{in}\quad \mathcal D.
$$
Convergence $y_n\to y_0$ in $\cD$, where 
\[
y_0(t)=x_0(A^{-1}(t))+ F_0(F_0^{-1}(m_0(A^{-1}(t)))),\ t\geq0,
\]
is proved if we show that functions $x_0(A^{-1})$ and  $F_0(F_0^{-1}(m_0(A^{-1})))$ do not have joint points of discontinuity, see Lemma \ref{lemma:sum}. The function  $A^{-1}$ is continuous. Thus, it suffices to show
\bel{eq:absence of discont}
 x_0 \mbox{ and  } F_0(F_0^{-1}(m_0)) \mbox{ do not have joint points of discontinuity. }
 \ee
Assume that the function $F_0(F_0^{-1}(m_0))$ is discontinuous at a point $t$, i.e., $F_0(F_0^{-1}(m_0))$ has a positive  jump at $t$. Set $s:=F_0^{-1}(m_0(t))$. Then $F$ has a jump at $s$.
Denote $\alpha:=F_0(s-)< F_0(s) =:\beta.$ 
Notice that $F_0^{-1}(z)=s$ for $ z\in[\alpha, \beta]$, 
$F_0^{-1}(z)<s$ for $ z<\alpha$, and $F_0^{-1}(z)>s$ for $ z>\beta$. Since $m_0$ is continuous and non-decreasing we must have $m_0(t)=\alpha=F_0(s-)$
and $m_0(\tilde t)<\alpha$ for $\tilde t<t.$ It follows from assumption (e)
 of the Theorem that $t$ is a point of growth of $m_0.$ Hence $x_0$ can't have positive jump at $t$. Since $x_0$ does not have negative jumps at all by assumptions of the theorem, the function $x_0$ is continuous at $t.$  This proves \eqref{eq:absence of discont} and hence the theorem in the case $\rho\in(0,\infty)$.

\subsection{Proof of   Theorem \ref{thm:main} for $\varrho=0$.} Similarly to the case $\varrho\in(0,\infty)$, by Lemma \ref{lemma:est-1} 
\[
\varlimsup_{n\to\infty}[-m_{n}(T_{A_{n}}(t))+F_{n}(T_{B_{n}}(t)/\rho_n-))] \leq 0.
\]
Since $T_{A_n}(t)+T_{B_n}(t)=t$ and $F_0(\infty)=\infty$, we have convergence $T_{A_n}(t) \to t$ and $T_{B_n}(t) \to 0$ for any fixed $t\geq 0$. Moreover, since all functions are non-decreasing in $t$ the convergence is locally uniform. It follows from \cite[Theorem 13.2.2]{Whitt} that 
 $x_{n}(T_{A_{n}})\to x_0$  and  $m_{n}(T_{A_{n}})\to m_0$ in $\cD$
 as $n\to\infty$. Moreover, continuity of $m_0$ implies the locally uniform convergence $m_{n}(T_{A_{n}})\to m_0$. Thus
\begin{equation}
\label{eq:lower}
-m_0(t)+\varlimsup_{n\to\infty} F_n(T_{B_n}(t)/\varrho_n-)  \leq 0.
\end{equation}
Using Lemma \ref{lemma:est-2}, similarly to the reasoning above, we obtain the inequality:
\begin{equation}
\label{eq:upper}
-m_0(t)+\varliminf_{n\to\infty} F_n(T_{B_n}(t)/\varrho_n)  \geq 0.
\end{equation}
Fix a $t > 0$ and set $s_n:=T_{B_n}(t)/\varrho_n$. Then we can rewrite~\eqref{eq:lower} and~\eqref{eq:upper} as 
\[
\varlimsup_{n\to\infty} F_n(s_n-)\leq m_0(t)\leq\varliminf_{n\to\infty} F_n(s_n).
\] 
Let $s_0$ be a limit point of $(s_n)$ (including infinity). By Lemma \ref{prop:criterion}, using the fact that $F_0$ is strictly increasing, we get the inequality
\begin{equation}
\label{eq:new-two-sided}
F_0(s_0-)\leq m_0(t)\leq F_0(s_0).
\end{equation}
Thus $s_0 := F_0^{-1}(m_0(t))$. Since any limit point of $(s_n)$ is determined uniquely we have convergence  $T_{B_n}(t)/\varrho_n =s_n\to s_0= F_0^{-1}(m_0(t))$ as $n \to \infty$ for any $t\ge0$, and hence a locally uniform convergence because all functions are non-decreasing and the limit is continuous. The same arguments as for the case $\varrho\in(0,\infty)$ imply convergence in $\cD$: $F_n(T_{B_n}/\varrho_n)\to F_0(F_0^{-1}(m_0))$, and finally 
$$
y_n:=x_n(T_{A_n})+ F_n(T_{B_n}/\varrho_n) \to y_0 := x_0 + F_0(F^{-1}_0(m_0)) = \mathcal S(x_0, F_0).
$$

\subsection{Proof of   Theorem \ref{thm:main} for $\varrho=\infty$.} 
Note that $\mathcal S(x_0, F_0)(t\wedge \sigma)=x_0(t\wedge \sigma).$ Since $\lim_{n\to\infty}\varrho_n=\infty$ and $T_{B_n}(t)\leq t$ we have the local uniform convergence 
$T_{B_n}(t)/\varrho_n\to0$ as $n\to\infty.$ Recall that $F_0(0)=0$ and $F_n\to F_0$ in $\cD.$ Due to Lemma \ref{prop:criterion},  for any $t\geq 0$ we have convergence $\lim_{n\to\infty}F_n (T_{B_n}(t)/\varrho_n)=F_0(0)=0$ and even locally uniform convergence because  functions $F_n (T_{B_n})$ are non-decreasing. It is clear that for any $t<\sigma$ we have $T_{A_n}(t)=t$ for sufficiently large $n.$ On the other hand, similarly to the case $\varrho\in(0,\infty)$,  Lemma \ref{lemma:est-2} implies:
\[
0\leq \varliminf_{n\to\infty} (F_n(T_{B_n}(t)/\varrho_n)-m_n(T_{A_n}(t)))= \varliminf_{n\to\infty}  (-m_n(T_{A_n}(t)))=
-\varlimsup_{n\to\infty}  m_n(T_{A_n}(t))\leq 0
\]
 for any $t\geq 0$. Since 
 $\inf\{s\geq 0\ | \ x_0(s)=0\}=\inf\{s\geq 0\ | \ x_0(s)<0\}$ by the assumption and $\inf\{s\geq 0\ | \ x_0(s)<0\}=\inf\{s\geq 0\ | \ m_0(s)>0\}$, we have  $\varlimsup_{n\to\infty}T_{A_n}(t)\leq \sigma$ for any $t$. We may conclude that
$T_{A_n}(t) \to t\wedge \sigma$. To prove the theorem, it suffices to verify convergence
$x_n( T_{A_n} )\to x_0(\cdot\wedge \sigma).$ Let us apply  Lemma \ref{prop:criterion}. If   $t_0\neq \sigma,$  then conditions of 
Lemma \ref{prop:criterion} are obviously satisfied.

Recall that  $\inf\{s\geq 0\ | \ x_0(s)=0\}=\inf\{s\geq 0\ | \ x_0(s)<0\}$. Hence  $x_0$  doesn't have a (positive) jump at $\sigma=\inf\{s\geq 0\ | \ x_0(s)=0\}$ and so $x_0$ is continuous at $t_0=\sigma$. Thus $x_0(\cdot\wedge \sigma)$ is continuous at $t_0= \sigma$, and  consequently conditions of Lemma \ref{prop:criterion} are satisfied for $t_0=\sigma$ too. $\square$

\section{Appendix}

\subsection{Proof of Lemma~\ref{lemma:existence}} The proof of existence is straightforward and is done exactly as in the classic Skorokhod problem. 
Let us show uniqueness: take two solutions $y_1, y_2$, and the corresponding boundary terms $l_1, l_2$. Then 
\begin{align*}
(y_1(t)-y_2(t))^2 &= 2\int_{(0,t]}(y_1(s)-y_2(s))\, \mathrm{d}(F(l_1(s))-F(l_2(s))) \\ & -\sum_{0<s\leq t}\Big(\Delta\big(F(l_1(s))-F(l_2(s))\big)\Big)^2 \\ & =
2\int_{(0,t]}(y_1(s)-y_2(s))\, \mathrm{d} F(l_1(s)) 
-2\int_{(0,t]}(y_1(s)-y_2(s))\, \mathrm{d} F(l_2(s))
\\ & -\sum_{0<s\leq t}\Big(\Delta\big(F(l_1(s))-F(l_2(s))\big)\Big)^2 \\ & =
-2\int_{(0,t]} y_2(s)\, \mathrm{d} F(l_1(s)) 
-2\int_{(0,t]} y_1(s)\, \mathrm{d} F(l_2(s))
\\ &  -\sum_{0<s\leq t}\Big(\Delta\big(F(l_1(s))-F(l_2(s))\big)\Big)^2\leq 0.
\end{align*}
 So, $y_1(t)=y_2(t)$ and thus $F(l_1(t))=F(l_2(t))$ for all $t\geq 0.$ Since $F$ is strictly increasing, $l_1(t)=l_2(t). \quad \square$

\subsection{Proof of Lemma~\ref{lemma:est-1}}
Recall that $B = \cup_{k=1}^{\infty}[\rho_k, \tau_k)$, $A = \cup_{k=1}^{\infty}[\tau_{k-1}, \rho_k)$. We also know that $y(t)\leq 0$ for $t\in B$, and therefore
\begin{equation}
    \label{eq:y-negative}
    y(t-)\leq 0\ \mbox{for}\ t\in  \bigcup\limits_{k=1}^{\infty}(\rho_k, \tau_k].
\end{equation} 

{\it Case 1.}  Assume  $t\in (\rho_k, \tau_k]$ for some $k, $ then 
$$
F(T_{B}(t)-)=\lim_{z\uparrow T_{B}(t)}F(z)= \lim_{s\uparrow t }F(T_{B} (s)) = F(T_{B} (t-)) \equiv (F\circ T_{B}) (t-).
$$
The second equality is true because $T_{B}$ is strictly increasing on $(\rho_k, \tau_k]$; thus, the left limit of the function $F$ at the point $T_B(\tau_{k-1})$ is equal to the left limit of the composition $F\circ T_B$ at the point $\tau_{k-1}$. Next, apply the formulas~\eqref{eq:main_eq_def_system}, ~\eqref{eq:m-n-sup},~\eqref{eq:y-negative}. We get:
\[
F(T_{B}(t)-)
= F(T_{B} (t-))= y(t-) - x(T_{A}(t-)) \leq 0 + m(T_{A}(t-))\leq  m(T_{A}(t)).
\]
For the last inequality, we use that $m(T_{A})$ is non-decreasing. This completes the proof of Lemma~\ref{lemma:est-1} in Case 1. 

{\it Case 2.}  Assume  $t\in (\tau_{k-1}, \rho_k]$ for some $k$.
On this interval, the function $T_{B}$ is constant. Therefore, $T_{B}(t)=T_{B}(\tau_{k-1})$. Thus
\begin{equation}
    \label{eq:prev-estimate-0}
F(T_{B}(t)-) = F(T_{B}(\tau_{k-1})-)= F(T_{B}(\tau_{k-1}-) ) \equiv (F\circ T_B)(\tau_{k-1}-).
\end{equation}
The second equality in~\eqref{eq:prev-estimate-0} from the fact that $T_{B}$ is strictly increasing  on $(\rho_{k-1}, \tau_{k-1}]$. Thus, the left limit of the function $F$ at the point $T_B(\tau_{k-1})$ is equal to the left limit of the composition $F\circ T_B$ at the point $\tau_{k-1}$.
The functions  $s \mapsto T_A(s-)$ and  $m$ are always non-decreasing. Thus $s \mapsto m(T_A(s-))$ is also non-decreasing. Using \eqref{eq:main_eq_def_system} again, we get
\begin{equation}
\label{eq:prev-estimate-1}
F(T_{B}(\tau_{k-1}-)) =
y(\tau_{k-1}- ) - x(T_{A}(\tau_{k-1}- )).
\end{equation}
Next, using~\eqref{eq:m-n-sup} and \eqref{eq:y-negative}, we get:
\begin{equation}
\label{eq:final-estimate}
y(\tau_{k-1}- ) - x(T_{A}(\tau_{k-1}- )) \le 0 - x(T_{A}(\tau_{k-1}))\leq  m(T_{A}(\tau_{k-1}))  \leq m(T_{A}(t)).
\end{equation}
Combining~\eqref{eq:prev-estimate-0}, ~\eqref{eq:prev-estimate-1},~\eqref{eq:final-estimate}, we complete the proof of Lemma~\ref{lemma:est-1} in Case 2. 

\subsection{Proof of Lemma~\ref{lemma:est-2}} We prove the statement by induction. First, the induction base: $[0, \rho_1)$, we are in regime $A$. Thus $T_A(t) = t$ and $T_B(t) = 0$. Therefore, $x(s) = y(s)$ for $s \in [0, \rho_1)$. Hence we have:
$$
\rho_1 = \inf\{s \ge 0\mid y(s) \leq -\delta\} = \inf\{s \ge 0\mid x(s) \leq -\delta\}.  
$$
Thus $m(t) \leq \delta$ for $t \in [0, \rho_1)$. Note that $F(0)= 0$, and the two suprema in the right-hand side of the inequality of Lemma~\ref{lemma:est-2} are non-negative.  This proves the lemma statement on $[0, \rho_1)$.

Before the induction step, notice that regimes can switch only when $x$ attains its minimum or $F$ attains its maximum:
\[
x(T_A(\rho_k))=-m(T_A(\rho_k)),\ \ F(T_B(\tau_k))=\max_{s\in [0,\tau_k]}F(T_B(s)), \ \ k\geq 1.
\]

{\it Case 1.} Assume the statement is true on $[0,\rho_k)$. Let us show it for $t\in [\rho_k, \tau_k)$. We have
\begin{align}
\label{eq:primary}
\begin{split}
F(T_B(t)) = \big(F(T_B(t))-F(T_B(\rho_k))\big)+
\big(F(T_B(\rho_k))-F(T_B(\rho_k-))\big) +\\
\big(F(T_B(\rho_k-)) + x(T_A(\rho_k-))\big)
+\big(x(T_A(\rho_k))-x(T_A(\rho_k-))\big)- x(T_A(\rho_k)).
\end{split}
\end{align}
Note that $T_A$ is constant on $[\rho_k, \tau_k)$: $T_A(t) = T_A(\rho_k)$. Thus $x(T_A(t))=x(T_A(\rho_k))$, and
\begin{equation}
\label{eq:T-A-const}
x(T_A(t)) = x(T_A(\rho_k))=-m(T_A(\rho_k)) = -m(T_A(t)).
\end{equation}
Similarly, $T_B$ does not grow on $[\tau_{k-1}, \rho_k]$, thus we have equality $F(T_B(\rho_k))=F(T_B(\rho_k-))$.
Combining~\eqref{eq:primary}, ~\eqref{eq:T-A-const}, we get:
\begin{align}
\label{eq:secondary}
\begin{split}
F(T_B(t)) & \geq  \inf_{0\leq s_1\leq s_2\leq t} \big(F(T_B(s_2))-F(T_B(s_1))\big) + 0 + y(\rho_k-)  \\ & + \big(x(T_A(\rho_k)) -x(T_A(\rho_k-))\big)+ m(T_A(t)).
\end{split}
\end{align}
Recalling that $y(\rho_k-)\geq -\delta$ and combining~\eqref{eq:secondary},~\eqref{eq:T-A-const} wit the elementary estimates:
\[
\inf_{0\leq s_1\leq s_2\leq t }\big(F(T_B(s_2))-F(T_B(s_1))\big)  {=}  -\sup_{0\leq t_1\leq t_2\leq T_B(t)}\big(F(t_1)-F(t_2)\big);
\]
\[
x(T_A(\rho_k))-x(T_A(\rho_k-))   \ge -\sup_{0\leq s \leq t } \big( x(T_A(s-))-x(T_A(s))\big) = -\sup_{0\leq z \leq T_A(t) } \big( x(z-)-x(z)\big),
\]
 we complete the proof of Lemma~\ref{lemma:est-2} in Case 1. 

{\it Case 2.} Assume the statement on $[0,\tau_{k-1})$, and show it for $t\in [ \tau_{k-1}, \rho_k)$. Use~\eqref{eq:main_eq_def_system} to get
\[
F(T_B(t)) = y(t) - x(T_A(t))\geq -\delta - x(T_A(t))\geq  -\delta +m(T_A(t)). 
\]
This completes the proof of Lemma~\ref{lemma:est-2} in Case 2.

\subsection{Proof of Lemma~\ref{lemma:jumps}} Define the function $j : \mathcal D_T \to \mathbb R$ as the largest negative jump of a function $x \in \mathcal D_T$. This function is well-defined since any function $x \in \mathcal D_T$ has only countably many jumps and only finitely many jumps which exceed any given positive level. The function $j$ is continuous in the Skorokhod topology of $\mathcal D_T$. This can be proved similarly to \cite[Example 12.1]{Billingsley}, which shows that the largest jump (regardless of direction) of a function in $\mathcal D$ is a continuous function $\mathcal D \to \mathbb R$. Since $x_0$ has no negative jumps, obviously $j(x_0) = 0$. Therefore, $\sup_{s \in [0, T]}(x_n(s-) - x_n(s)) = j(x_n)(T)\to 0. \quad \square$

\subsection{Proof of Lemma~\ref{lemma:increase}} Assume that $F_0$ is continuous at $T$. There exists for every $n$ a strictly increasing one-to-one function $\lambda_n : [0, T] \to [0, T]$ such that $F_n(\lambda_n) \to F_0$ uniformly on $[0, T]$. We can rewrite
\begin{equation}
\label{eq:switch}
\sup\limits_{0 \leq s_1 \leq s_2 \leq T}(F_n(s_1) - F_n(s_2)) = \sup\limits_{0 \leq s_1 \leq s_2 \leq T}(F_n(\lambda_n(s_1)) - F_n(\lambda_n(s_2))).
\end{equation}
Next, by uniform convergence $F_n(\lambda_n) \to F_0$:
\begin{equation}
\label{eq:convergence}
\sup\limits_{0 \leq s_1 \leq s_2 \leq T}(F_n(\lambda_n(s_1)) - F_n(\lambda_n(s_2))) \to \sup\limits_{0 \leq s_1 \leq s_2 \leq T}(F_0(s_1) - F_0(s_2)).
\end{equation}
Since the function $F_0$ is non-decreasing, the right-hand side of~\eqref{eq:convergence} is zero. Together with~\eqref{eq:switch}, this completes the proof of Lemma~\ref{lemma:increase}. $\square$


\begin{thebibliography}{99}

\bibitem{Amir} \textsc{Madjid Amir} (1991). Sticky Brownian Motion as the Strong Limit of a Sequence of Random Walks. \textit{Stochastic Processes and their Applications} \textbf{39} (2), 221--237. 

\bibitem{Anulova} \textsc{Svetlana Anulova} (1982).
  On stochastic differential equations with boundary conditions in a half-plane. \textit{Mathematics of the USSR-Izvestiya}, \textbf{18(3)}, 423--437.
 
\bibitem{Asmussen} \textsc{Soren Asmussen} (2003). \textit{Applied Probability and Queues.} {\it Applications of Mathematics} \textbf{51}, Springer. 

\bibitem{Bass} \textsc{Richard Bass} (2014). A Stochastic Differential Equation with a Sticky Point. \textit{Electronic Journal of Probability} \textbf{19}, 1--22. 

\bibitem{Bertoin} \textsc{Jean  Bertoin} (1996)  \textit{L\`evy processes} {\it Cambridge Tracts in Mathematics} \textbf{121}. Cambridge University Press.

\bibitem{Billingsley} \textsc{Patrick Billingsley} (1999). \textit{Convergence of Probability Measures.} Second edition, Wiley.

\bibitem{Excursion} \textsc{Robert M. Blumenthal} (1992). \textit{Excursions of Markov processes.} {\it Probability and its Applications.} Springer.

\bibitem{Bruggeman} \textsc{Cameron Bruggeman, Andrey Sarantsev} (2017). Penalty Method for Reflected Diffusions on the Half-Line. \textit{Stochastics} \textbf{89} (2), 485--509. 

\bibitem{DupuisIshii} \textsc{Paul Dupuis, Hitoshi Ishii} (1991). On Lipschitz Continuity of the Solution Mapping to the Skorokhod Problem, with applications. \textit{tochastics Stochastics Reports} \textbf{35} (1), 31-–62.

\bibitem{Sticky-BM} \textsc{Hans-Jurgen Engelbert, Goran Peskir} (2014). Stochastic Differential Equations for Sticky Brownian Motion. \textit{Stochastics} \textbf{86} (6), 993--1021.

\bibitem{Ethier-Kurtz} \textsc{Stewart N. Ethier, Thomas G. Kurtz} (2005). \textit{Markov Processes: Characterization and Convergence}. Wiley.

\bibitem{Feller} \textsc{William Feller} (1954). Diffusion Processes in One Dimension. \textit{Transactions of the American Mathematical Society} \textbf{77} (1), 1--33.

\bibitem{HarrisonLemoine} \textsc{Michael J. Harrison, Austin J. Lemoine} (1981). Sticky Brownian Motion as the Limit of Storage Processes. \textit{Journal of Applied Probability}  \textbf{18} (1), 216--226.

\bibitem{Iglehart} \textsc{Donald L. Iglehart} (1973). Weak Convergence in Queueing Theory. \textit{Advances in Applied Probability} \textbf{5} (3), 570--594. 

\bibitem{Iksanov} \textsc{Alexander Iksanov, Andrey Pilipenko, Ben Povar} (2022). Functional Limit Theorems for Random Walks Perturbed by Positive Alpha-Stable Jumps. {\it Bernoulli} \textbf{29} (2), 1638--1662. 

\bibitem{Ito} \textsc{Kioshi It\^o, Henry P. McKean} (1963). Brownian Motions on a Half-Line, \textit{Illinois Journal of Mathematics} \textbf{7} (2), 181--231.

\bibitem{Kingman} \textsc{John F. C. Kingman} (1961). The Single Server Queue in Heavy Traffic. \textit{Mathematical Proceedings of the Cambridge Philosophical Society} \textbf{57} (4), 902--904.

\bibitem{mapping} \textsc{Lukasz Kruk, John Lehoczky, Kavita Ramanan, Steven Shreve} (2007). An Explicit Formula for the Skorohod Map in $[0, a]$. \textit{Annals of Applied Probability} \textbf{35} (5), 1740--1768.

\bibitem{Kushner} \textsc{Harold J. Kushner} (2001). \textit{Heavy Traffic Analysis of Controlled Queueing and Communication Networks.} \textit{Stochastic Modeling and Applied Probability} \textbf{47}, Springer.

\bibitem{MikulyaviciusExist} \textsc{R. Mikulevi\v{c}ius} (1977). The existence of solutions of a martingale problem. \textit{Lithuanian Mathematical Journal}, \textbf{17(4)}, 538---550.

\bibitem{MikuljaviciusUniq} \textsc{R. Mikulevi\v{c}ius} 
  (1978). On uniqueness of solutions of the martingale problem. \textit{Lithuanian Mathematical Journal}, \textbf{18(2)}, 202--209.

\bibitem{PilipenkoBook} \textsc{Andrey Pilipenko} (2014). \textit{An Introduction to Stochastic Differential Equations with Reflection.} \textit{Lectures in Pure and Applied Mathematics} \textbf{1}. Potsdam University Press. 

\bibitem{Pilipenko_Gen_refl} \textsc{Andrey Pilipenko} (2012). On the Skorokhod Mapping for Equations with Reflection and Possible Jump-Like Exit from a Boundary. \textit{Ukrainian Mathematical Journal} \textbf{63} (9), 1415--1432.

\bibitem{PilipenkoPrykhodko} \textsc{Andrey Pilipenko, Yuri Prykhodko} (2014). Limit Behavior of a Simple Random Walk with Non-Integrable Jump from a Barrier. \textit{Theory of Stochastic Processes} \textbf{19} (1), 52--61.

\bibitem{StickyLevy} \textsc{Miriam Ramirez, Geronimo Uribe Bravo} (2023). The Sticky Levy Process as a Solution to a Time Change Equation. Available at arXiv:2301.00063.

\bibitem{RevuzYor} \textsc{Daniel Revuz, Marc Yor} (1999). \textit{Continuous Martingales and Brownian Motion}, third edition. \textit{A Series of Comprehensive Studies in Mathematics} \textbf{293}, Springer. 

\bibitem{Skorokhod1961a} \textsc{Anatoliy V. Skorokhod} (1961). Stochastic Equations for Diffusion Processes in a Bounded Region I. \textit{Theory of Probability and its Applications} \textbf{6} (3), 264--274. 

\bibitem{Skorokhod1961b} \textsc{Anatoliy V. Skorokhod} (1962). Stochastic Equations for Diffusion Processes in a Bounded Region II. \textit{Theory of Probability and its Applications} \textbf{7} (1), 3--23. 

\bibitem{Slo} \textsc{Leszek Slominski} (2013). Weak and Strong Approximations of Reflected Diffusions via Penalization Methods. \textit{Stochastic Processes and its Appications} \textbf{123} (3), 752--763. 

\bibitem{Tanaka} \textsc{Hiroshi Tanaka} (1979) {Stochastic Differential Equations with Reflecting Boundary Condition in Convex Regions} \textit{Hiroshima Math. J.}
\textbf{9}, 163--177.

\bibitem{Watanabe1975Wentzell_boundary} \textsc{Shinzo Watanabe} (1979). Construction of diffusion processes with Wentzell's boundary conditions by means of Poisson point processes of Brownian excursions. \textit{Banach Center Publications}, \textbf{5}, 255--271.

\bibitem{Wentzell} \textsc{Alexander D. Wentzell} (1959). On Boundary Conditions for Multi-Dimensional Diffusion Processes.
\textit{Theory of Probability and Its Application} \textbf{4} (2), 164--177.

\bibitem{Whitt} \textsc{Ward Whitt} (2002). \textit{Stochastic Process-Limits.} {\it Springer Series in Operations Research and Financial Engineering,} Springer.


\bibitem{Yamada1994ReflDelay} \textsc{Keigo Yamada}
   (1994). Reflecting or sticky Markov processes with L\'evy generators as the limit of storage processes. \textit{Stochastic Processes and their Applications,} \textbf{52(1)}, 135--164.
 
 
\end{thebibliography}
\end{document}